\newtheorem{Theorem}{Theorem}[section]
\newtheorem{Coro}[Theorem]{Corollary}
\newtheorem{Lemma}[Theorem]{Lemma}
\newtheorem{Prop}[Theorem]{Proposition}
\theoremstyle{definition}
\newtheorem{Def}[Theorem]{Definition}
\theoremstyle{remark}
\newtheorem{rk}[Theorem]{Remark}
\renewcommand{\aa}{\alpha}
\newcommand{\eps}{\varepsilon}
\newcommand{\R}{\mathbb{R}}
\newcommand{\N}{\mathbb{N}}
\newcommand{\C}{\mathbb{C}}
\numberwithin{equation}{section}
\title[On sub-Riemannian Weyl's theorems]  {On Weyl's type theorems and genericity of projective rigidity in sub-Riemannian Geometry}
\date{\today}
\thanks{This work was supported by a public grant as part of the
	Investissement d'avenir project, reference ANR-11-LABX-0056-LMH, LabEx
	LMH, in a joint call with Programme Gaspard Monge en Optimisation et
	Recherche Op\'{e}rationnelle, by the iCODE
	Institute project funded by the IDEX Paris-Saclay, ANR-11-IDEX-0003-02 and by the Grant ANR-15-CE40-0018 of the ANR. I.\ Zelenko was partly supported by NSF grant DMS-1406193 and Simons Foundation Collaboration Grant for Mathematicians 524213.}
\author{Fr\'ed\'eric Jean}
\address{Fr\'ed\'eric Jean \\
	UMA, ENSTA Paris, Institut Polytechnique de Paris, F-91120 Palaiseau, France}
\email{frederic.jean@ensta-paris.fr}
\urladdr{\url{http://uma.ensta-paris.fr/~fjean}}
\author{Sofya Maslovskaya}
\address{Sofya Maslovskaya \\
	INRIA Sophia Antipolis, team Biocore, 2004 Route des Lucioles, BP93
	06902 Sophia Antipolis Cedex, France}
\email{Sofya.Maslovskaya@inria.fr}
\author{Igor Zelenko}
\address{Igor Zelenko\\
	Department of Mathematics\\
	Texas A\&M University\\
	College Station\\
	Texas \ 77843\\
	USA}
\email{zelenko@math.tamu.edu}
\urladdr{\url{http://www.math.tamu.edu/~zelenko}}
\begin{document}
	\subjclass[2010]{53C17, 53A20, 53A30}
	\keywords{Sub-Riemannian geometry, Riemannian geometry, Conformal geometry, Projective Geometry, Normal Geodesics, Abnormal Geodesics, Nilpotent Approximation}
\begin{abstract}		
	H.~Weyl in 1921 (\cite{W}) demonstrated that for a connected manifold of dimension greater than $1$, if two Riemannian metrics are conformal and have the same geodesics up to a reparametrization, then one metric is a constant scaling of the other one. In the present paper we investigate the analogous property for sub-Riemannian metrics. In particular, we prove  that the analogous statement, called the \emph{Weyl projective rigidity}, holds either in real analytic category for all sub-Riemannian metrics on distributions with a specific property of their complex abnormal extremals, called \emph{minimal order}, or in smooth category for  all distributions such that all complex abnormal extremals of their nilpotent approximations are of minimal order.
	This also shows, in real analytic category,  the genericity  of distributions for which all sub-Riemannian metrics are Weyl projectively rigid and genericity of Weyl projectively rigid sub-Riemannian metrics on a given bracket generating distributions. Finally, this  allows us to get analogous genericity results for projective rigidity of sub-Riemannian metrics, i.e.\ when the only sub-Riemannian metric having the same sub-Riemannian geodesics , up to a reparametrization, with a given one, is a constant scaling of this given one. This is the improvement of our results on the genericity of weaker rigidity properties proved in recent paper \cite{jmz2018}.
\end{abstract}
\maketitle

\section{Statement of the problem and main results}

In Riemannian geometry, projectively (or geodesically) equivalent metrics are Riemannian metrics on the same manifold which have the same geodesics, up to reparameterization. The local classification of all pairs of projectively equivalent Riemannian metrics under natural regularity assumptions was maid  by Levi-Civita in 1896
(\cite{Levi-Civita1896}). This paper is devoted to the projective equivalence of more general class of metrics, the sub-Riemannian metrics and is a continuation of our recent work \cite{jmz2018}.

A sub-Riemannian manifold is a triple $(M,D,g)$, where $M$ is a smooth connected manifold, $D$ is a distribution on $M$ (i.e. a subbundle of $TM$) which is assumed to be bracket generating everywhere in the sequel without special mentioning, and $g$ is a Riemannian metric on $D$, and thus defines an Euclidean structure   on every fiber of $D$. We say that $g$ is a sub-Riemannian metric on $(M,D)$.  Consider the optimal control problem of minimizing the corresponding energy functional $E(\gamma) = \int g(\dot{\gamma},\dot{\gamma})\,dt$ on the space of absolutely continuous curve tangent to $D$. The geodesics  of the sub-Riemannian metric $g$ are projections of the Pontryagin extremals for this problem. Sub-Riemannian Pontryagin extremals and the corresponding geodesics  can be of two types, normal or abnormal.

The normal Pontryagin extremal  of the sub-Riemannian metric are integral curves of the Hamiltonian system for the corresponding Hamiltonian $h$, living on a \emph{ nonzero level} set of this Hamiltonian.   The \emph{Hamiltonian}  $h: T^*M \rightarrow \R$ of the sub-Riemannian metric $g$ is defined by
\begin{equation}
\label{Ham}
h(q,p) = \frac{1}{2} \|  p \|_q^2, \qquad q\in M, \ p \in T^*_qM,
\end{equation}
where $$
\| p \|_q = \max \left\{ \left< p, v\right> :  v \in D(q), \ g(q)(v,v) = 1 \right\}, \qquad p \in T^*_q M.
$$
The abnormal Pontryagin extremals live in the \emph{zero level set of} $h$, or, equivalent, on the annihilator $D^\perp$ of the distribution $D$, i.e.
	\begin{equation}
	\label{Dperp}
	D^\perp=\{(q,p)\in T^*M: q\in M, \ p\in T_q^*M, \ p|_{D(q)}=0\}.
	\end{equation}
Their description is more involved, as they are not the integral curve of the sub-Riemannian Hamiltonian $h$, and will be given in section \ref{gensec}.  Abnormal geodesics depend only on the distribution $D$, not on $g$, so they are automatically the same for all sub-Riemannian metrics on the same distribution.

Riemannian metrics appear as the particular case of sub-Riemannian ones, where $D=TM$. The classical Riemannian geodesics can be equivalently described as the normal geodesics coming from the corresponding Hamiltonian \eqref{Ham}. Riemannian metrics do not have abnormal geodesics. We thus extend the definition of projectively equivalence to sub-Riemannian metrics in the following way.

\begin{Def}
	Let $M$ be a manifold and $D$ be a bracket generating distribution on $M$. Two sub-Riemannian metrics  $g_1$ and $g_2$ on  $(M,D)$ are called \emph{projectively equivalent} at $q_0\in M$ if they have the same geodesics, up to a reparameterization, in a neighborhood of $q_0$.
\end{Def}

The trivial example of projectively equivalent metrics is the one of two constantly proportional metrics $g$ and $cg$, where $c>0$ is a real number. We thus say that these metrics are \emph{trivially} (projectively or affinely) equivalent.

\begin{Def}
	A sub-Riemannian metric $g$ on $(M,D)$ is said to be \emph{projectively rigid} if it admits no non-trivially projectively equivalent metric.
\end{Def}

It is still a widely open problem to classify all pairs of projectively equivalent  sub-Riemannian metrics.
A much easier task  is to study whether the projectively rigid sub-Riemannian metrics form a generic set in the space of all sub-Riemannian metrics on a connected manifolds. In studying this question one naturally arrives to the following weaker (intermediate) notion of rigidity.

\begin{Def}
	A sub-Riemannian metric $g$ is said to be \emph{conformally projectively rigid} if any metric projectively equivalent to $g$ is conformal to $g$.
\end{Def}
In our recent paper \cite{jmz2018} we proved the following genericity results for conformally projective rigidity:

\begin{Theorem}
	\label{th:generic_metric}
	Let $M$ be a smooth manifold and $D$ be a smooth distribution on $M$. A generic sub-Riemannian metric on $(M,D)$ is conformally projectively rigid.
\end{Theorem}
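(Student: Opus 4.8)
The plan is to establish the genericity locally and then globalize by a standard second-countability argument, reducing the statement to showing that, near each point $q_0\in M$, the set of germs of sub-Riemannian metrics on $(M,D)$ admitting a non-conformal projectively equivalent companion is meager. Since abnormal geodesics depend only on $D$ and are therefore shared by all metrics on $(M,D)$, the whole content of projective equivalence is carried by the \emph{normal} geodesics, which are the projections to $M$ of the integral curves of the Hamiltonian field $\vec h$ of \eqref{Ham} on the level set $\{h=\tfrac12\}$. I would thus reformulate conformal projective rigidity as a statement about this Hamiltonian flow: $g$ fails to be conformally projectively rigid precisely when there is a second metric $\bar g$, with $\bar g(q)$ not proportional to $g(q)$ on some open set, whose normal geodesics coincide, up to reparametrization, with those of $g$.

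The second step is to turn this coincidence of unparametrized flows into an overdetermined linear system for the \emph{deviation} of $\bar g$ from the conformal class of $g$. Concretely, I would encode $\bar g$ through the $g$-symmetric endomorphism $L=g^{-1}\bar g$ of $D$ and separate its pure-trace part, which records the conformal factor, from its trace-free part $L_0$; conformality of $\bar g$ to $g$ is exactly the vanishing of $L_0$. Because the sub-Riemannian setting provides no canonical connection, I would express the constraint imposed by equality of unparametrized normal geodesics through the Agrachev--Zelenko canonical frame and the curvature maps of the Jacobi curves along each geodesic: two metrics with the same unparametrized normal geodesics have Jacobi curves related by a reparametrization, and matching the curvature invariants order by order yields a sequence of differential relations on $L$ along every geodesic. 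Collecting these relations over all initial covectors produces a closed, overdetermined linear system $\mathcal{D}L=0$ whose pure-trace solutions are the conformal deformations, and for which I must control the existence of solutions with $L_0\neq 0$.

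The third step is the prolongation and transversality argument. Prolonging $\mathcal{D}L=0$ and eliminating the derivatives of $L$ produces algebraic compatibility conditions polynomial in the jets of $g$; the existence of a solution with non-vanishing trace-free part then forces the $k$-jet of $g$ at $q_0$ to lie on a subvariety $\Sigma$ of the jet bundle $J^k$ of sub-Riemannian metrics. I would verify that $\Sigma$ has positive codimension and apply Thom's transversality theorem to the jet extension $j^k g$: for a generic $g$ the map $j^k g$ avoids $\Sigma$, so no non-conformal companion exists, which is exactly Theorem \ref{th:generic_metric}. The reduction of the global statement to local meagerness follows by writing $M$ as a countable union of coordinate charts and intersecting the corresponding residual sets.

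The main obstacle I anticipate is the positive-codimension claim for $\Sigma$, that is, the non-degeneracy showing that the curvature obstruction genuinely distinguishes the one-dimensional pure-trace conformal direction from all non-conformal deformations. In the Riemannian case this is Weyl's classical computation through the projective Weyl tensor \cite{W}, but sub-Riemannianly one must work with the full Jacobi-curve machinery and carefully track how the curvature maps transform under both reparametrization of geodesics and change of metric, while handling the nontrivial symbol of $D$ and the automatic presence of abnormal directions. It would suffice to exhibit nontriviality of the obstruction on a convenient model, such as the nilpotent approximation of $(M,D)$ at $q_0$, and then argue that the codimension bound persists on an open dense part of the jet bundle; making this persistence uniform, rather than only at special jets, is the delicate point.
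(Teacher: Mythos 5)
Your plan is a program rather than a proof, and the step you yourself flag as the ``delicate point'' is in fact the entire mathematical content of Theorem~\ref{th:generic_metric}. Concretely, two things are missing. First, you have not shown that matching unparametrized normal geodesics closes up into a finite-order linear system $\mathcal{D}L=0$ whose compatibility conditions are polynomial in a \emph{fixed} finite jet of $g$: prolongation a priori produces an infinite hierarchy of relations, and without a uniform finite-order determinacy statement there is no fixed $k$, no subvariety $\Sigma\subset J^k$, and hence nothing to which Thom transversality can be applied. Second, even granting such a $\Sigma$, the positive-codimension claim is exactly the sub-Riemannian replacement for Weyl's classical computation, and you concede that the transformation laws of the Jacobi-curve curvature maps under simultaneous reparametrization and change of metric are not at your disposal; so the obstruction is not exhibited even on a model, let alone uniformly over an open dense part of the jet space. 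There is also a glossed subtlety in your opening reduction: declaring that ``the whole content is carried by the normal geodesics'' ignores non-strictly normal geodesics, along which normal Pontryagin lifts are non-unique and any curvature-invariant comparison degenerates; handling precisely these geodesics (ample covectors, corank, families of non-strictly normal geodesics) is a substantive part of the known arguments and, indeed, the main theme of the present paper.

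For comparison, note that Theorem~\ref{th:generic_metric} is not proved in this paper at all: it is quoted from \cite{jmz2018}, where the route is genuinely different and avoids both curvature invariants and jet-space transversality. There, projective equivalence is converted into the existence of a fiber-preserving orbital diffeomorphism $\Phi$ satisfying \eqref{orbeq} (in the conformal case, the system \eqref{eq:3.6}--\eqref{eq:3.7}), whose infinite prolongation is the linear \emph{fundamental algebraic system} \eqref{eq:3.8}; the finiteness your approach lacks is supplied there by an algebraic fact, namely that the matrix $A$ is injective at generic covectors after finitely many layers (\cite[Proposition 3.11]{jmz2018}), which forces the components of $\Phi$ to be rational on the fibers of $T^*M$ and reduces rigidity to explicit divisibility and degeneracy conditions for polynomials on the fibers, conditions that one can check fail on an open dense set of metrics, with the nilpotent approximation serving as the nondegeneracy model. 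Your instinct to test nondegeneracy on the nilpotent approximation is consonant with that machinery, but as written your proposal leaves both decisive steps --- finite-order closure and positive codimension --- unproven, so it does not establish the theorem.
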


\begin{Theorem}
	\label{th:generic_distrib}
	Let $m$ and $n$ be two integers such that $2\leq m <n$, and assume $(m,n) \neq (4,6)$ and $m \neq n-1$ if $n$ is even.
	Then, given a smooth  $n$-dimensional manifold $M$ and  a generic smooth rank-$m$ distribution $D$ on $M$,
	any sub-Riemannian metric on $(M,D)$ is conformally projectively rigid.
\end{Theorem}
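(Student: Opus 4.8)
The plan is to turn conformal projective rigidity, which a priori is a statement about a pair of metrics, into a pointwise algebraic condition on the distribution alone, and then to establish that condition for generic $D$ by jet transversality. The crucial leverage is that the abnormal geodesics of $(M,D,g)$ depend only on $D$ and not on $g$; hence if $g_1$ and $g_2$ are projectively equivalent on $(M,D)$ they share all abnormal extremals. I would first encode the relation between two projectively equivalent metrics by a $g_1$-self-adjoint endomorphism $L$ of $D$, defined by $g_2(X,Y)=g_1(LX,Y)$, so that $g_2$ being conformal to $g_1$ is exactly the statement that $L$ is a scalar multiple of the identity on each fibre of $D$. The goal thus becomes: for generic $D$, projective equivalence forces $L=\lambda\,\mathrm{Id}$ with $\lambda$ a function on $M$.

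To exploit the shared abnormals, I would differentiate the common abnormal dynamics transversally. Fixing $q_0\in M$ and a covector $p\in D^\perp(q_0)$, the abnormal extremal through $(q_0,p)$ carries a second-variation, Jacobi-type quadratic form built from the Hamiltonian data of the metric along it. Projective equivalence forces the normal geodesics of $g_1$ and $g_2$ to coincide up to reparametrization, and comparing their behaviour transversally to the common abnormal extremals yields, for each $p\in D^\perp(q_0)$, an algebraic relation constraining the single matrix $L(q_0)$. Letting $p$ range over the fibre $D^\perp(q_0)$ produces a system of algebraic equations on $L(q_0)$ whose coefficients are built from the symbol of $D$ at $q_0$, that is, its bracket structure up to the relevant order, equivalently the nilpotent approximation. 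The content of the pointwise step is that, whenever this symbol is nondegenerate in a precise sense, the only solutions are the scalar ones $L(q_0)=\lambda\,\mathrm{Id}$, which gives conformality at $q_0$.

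It then remains to show that the set of ``bad'' symbols, those for which the algebraic system admits a non-scalar solution, is a proper algebraic subvariety of the relevant fibre of the jet bundle of rank-$m$ distributions, and to bound its codimension. I would compute this codimension explicitly as a function of $m=\mathrm{rank}\,D$ and $n=\dim M$ and verify that it strictly exceeds $n$ precisely under the hypotheses $2\le m<n$, $(m,n)\neq(4,6)$, and $m\neq n-1$ when $n$ is even. Given such a codimension bound, Thom's jet transversality theorem guarantees that for a generic distribution $D$ the symbol at every point $q_0\in M$ avoids the bad locus, so the pointwise step applies everywhere and every sub-Riemannian metric on $(M,D)$ is conformally projectively rigid.

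The main obstacle is concentrated in the two middle steps. Extracting a sharp and usable algebraic system for $L(q_0)$ from the transverse comparison along abnormal extremals demands a careful normal form for the abnormal Jacobi data and precise control of how $L$ enters it, and this is exactly where the excluded pairs intervene. When $n$ is even and $m=n-1$ the annihilator $D^\perp$ is a line bundle, so the abnormal extremals reduce to a single characteristic line field: abnormals do exist, so the purely normal (Riemannian-type Weyl) argument available in the odd corank-one contact case does not apply, yet their transverse data are too scarce to pin down all entries of $L$, and the pointwise criterion genuinely fails. The pair $(m,n)=(4,6)$ is a low-dimensional coincidence in which the symbol carries additional automorphisms and the algebraic system degenerates. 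Correspondingly, the codimension count for the bad locus is most delicate near these borderline dimensions, and checking that it strictly exceeds $n$ away from the excluded pairs, so that Thom transversality closes the argument, is the technical heart of the proof.
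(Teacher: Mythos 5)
You have identified a real ingredient (the transition operator $L$ with $g_2(X,Y)=g_1(LX,Y)$, and the fact that conformality means $L$ is scalar), but the load-bearing middle step of your plan is both unconstructed and based on a misreading of where the constraints come from. Abnormal extremals depend only on $D$, so any two metrics on $(M,D)$ share them \emph{automatically}; their being common to $g_1$ and $g_2$ carries no information about $L$ whatsoever, and you never actually produce the ``Jacobi-type quadratic form transverse to the abnormals'' or the algebraic system on $L(q_0)$ it is supposed to yield. In the actual argument (Theorem~\ref{th:generic_distrib} is quoted here from \cite{jmz2018}, and the present paper explains that it rests on Theorem~\ref{Cor:anyrank}), all constraints are extracted from the \emph{normal} Hamiltonian flows: projective equivalence produces an orbital diffeomorphism between the flows of $\vec h_1$ and $\vec h_2$ on $T^*M$, the fiber components of that diffeomorphism satisfy the fundamental algebraic system, and passing to the nilpotent approximation one proves a Levi-Civita-type dichotomy: if $g$ admits a non-conformal projectively equivalent companion, then the nilpotent approximation at generic points must admit a \emph{product structure}. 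Conformal projective rigidity is then the contrapositive, which is exactly Theorem~\ref{Cor:anyrank}; nothing in this chain resembles a pointwise algebraic system on $L(q_0)$ solved by transverse abnormal data.

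Your genericity endgame is likewise misaligned. There is no Thom jet-transversality count with a codimension threshold $n$; the genericity statement is that, outside the excluded pairs $(m,n)$, a generic rank-$m$ distribution has indecomposable nilpotent approximation at every point of an open dense set, which is a pointwise open-and-dense condition on the symbol (the graded nilpotent Lie algebra determined by the brackets). The excluded pairs are precisely those where indecomposability fails \emph{generically}, not borderline cases of a codimension estimate: for $m=n-1$ with $n$ even, the symbol is a single skew-symmetric form on the odd-dimensional space $\mathbb{R}^{n-1}$, whose generic kernel is one-dimensional, so the nilpotent approximation splits off a line (Heisenberg times $\mathbb{R}$); for $(m,n)=(4,6)$, a generic pencil of two skew forms on $\mathbb{R}^4$ can be simultaneously block-diagonalized into two $2\times 2$ blocks, so the generic nilpotent approximation is a product of two three-dimensional Heisenberg algebras. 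Your explanations for these exclusions (scarcity of transverse abnormal data in corank one; ``additional automorphisms'' in $(4,6)$) therefore do not match the true mechanism, and your assertion that the bad locus has codimension exceeding $n$ ``precisely under the hypotheses'' is stated but never derived --- that unproved claim, together with the unconstructed algebraic system for $L(q_0)$, is where the entire proof would have to live.
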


The latter theorem is based on the following result also proved in \cite{jmz2018}.

\begin{Theorem}
	\label{Cor:anyrank}
	If $D$ is a bracket generating distribution on a connected manifold $M$  such that the nilpotent approximation of it at every point of an open and dense subset of $M$ does not admit a product structure, then any sub-Riemannian metric on $D$ is conformally projectively rigid.
\end{Theorem}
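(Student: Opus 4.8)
The plan is to argue by contraposition. Suppose $g$ is a sub-Riemannian metric on $D$ that is \emph{not} conformally projectively rigid; I will produce a nonempty open set on which the nilpotent approximation admits a product structure, contradicting the hypothesis (which forbids this on a dense open set $U$). By assumption there is a metric $\tilde g$ on $(M,D)$ projectively equivalent to $g$ but not conformal to it. I encode the comparison of the two metrics by the $g$-self-adjoint, positive-definite bundle endomorphism $L\colon D\to D$ defined by $g(Lu,v)=\tilde g(u,v)$; conformality is exactly the statement $L=\lambda\,\mathrm{Id}$ for a positive function $\lambda$, so non-conformality means that $L$ has at least two distinct eigenvalues at some point. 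Since the number of distinct eigenvalues is lower semicontinuous, $L$ is non-scalar on a whole open set $V$, and there $D$ splits as the orthogonal direct sum $D=\bigoplus_i E_i$ of the nonzero eigendistributions of $L$.

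The first substantial step is to transfer this situation to the nilpotent approximation. Fix $q_0\in V\cap U$ (nonempty since $U$ is dense and $V$ is open), and blow up the structure at $q_0$ using the privileged-coordinate dilations: the rescaled metrics converge to the nilpotent metric $\hat g$ of the nilpotent approximation, and I would show that projective equivalence is inherited in the limit, so that $\hat g$ and the limit metric $\widehat{\tilde g}$ are projectively equivalent as sub-Riemannian metrics on the Carnot group. Because $L$ is a tensor on $D$ evaluated at $q_0$ in the limit, it converges to the \emph{constant} non-scalar $\hat g$-self-adjoint endomorphism $\hat L:=L(q_0)$, with $\widehat{\tilde g}(\cdot,\cdot)=\hat g(\hat L\,\cdot,\cdot)$. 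Thus I am reduced to the homogeneous model: a Carnot group carrying two projectively equivalent sub-Riemannian metrics related by a constant non-scalar tensor, together with the splitting $D=\bigoplus_i \hat E_i$ into the constant eigenspaces of $\hat L$.

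It then remains to show that this eigenspace splitting is compatible with the graded Lie algebra structure, i.e.\ that the $\hat E_i$ generate graded ideals whose sum is the whole Lie algebra, which is precisely a product structure of the nilpotent approximation. Here I would exploit the dilation symmetry of the Carnot group together with the structural (Levi-Civita--Benenti type) equations coming from projective equivalence of $\hat g$ and $\widehat{\tilde g}$: matching unparametrized normal geodesics forces a first-order relation for the covariant-type derivative of $\hat L$ along the flow, and homogeneity forces the eigendistributions to be bracket-closed in a graded fashion. Propagating the splitting of the first layer $D$ through the iterated brackets then splits the entire graded Lie algebra into two nonzero graded ideals, hence yields a product structure of the nilpotent approximation at $q_0$. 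Since $q_0\in U$, this contradicts the hypothesis, and therefore every sub-Riemannian metric on $D$ is conformally projectively rigid.

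The hard part will be the two transfer steps in the middle. First, the precise justification that projective equivalence of $g,\tilde g$ descends to projective equivalence of the nilpotent approximations $\hat g,\widehat{\tilde g}$: this requires controlling the metric-dependent reparametrizations of normal geodesics under the blow-up and checking that the abnormal geodesics, which are common to both metrics and depend only on $D$, do not obstruct the limit. Second, the algebraic claim that a constant non-scalar self-adjoint tensor on the first layer, compatible with projective equivalence on the Carnot group, must respect the stratification and split the full graded Lie algebra. This second point is exactly where the ``product structure'' enters: it is the Carnot-group incarnation of the classical fact that the eigendistributions of the Benenti tensor of two projectively equivalent Riemannian metrics are integrable and yield a local product decomposition.
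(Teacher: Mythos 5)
You should first note that Theorem \ref{Cor:anyrank} is not actually proved in this paper: it is quoted from \cite{jmz2018}, and only the machinery for the \emph{conformal} special case is redeveloped here. Measured against the proof in \cite{jmz2018}, your overall architecture is the right one --- argue by contraposition, encode the pair $(g,\tilde g)$ by the $g$-self-adjoint transition operator $L$, nilpotentize at a suitable point where $L$ is non-scalar, and show that the eigenspace splitting of the resulting constant operator $\hat L$ forces a product structure of the Carnot group. But the two steps you explicitly defer as ``the hard part'' are not technical afterthoughts; they are the entire content of the theorem, and the mechanisms you sketch for them would not work as stated.

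First, the descent of projective equivalence to the nilpotent approximation cannot be obtained from convergence of the dilated metrics: ``same geodesics up to reparametrization'' is not a property that passes to such anisotropic limits, since the reparametrizations and the normal (Pontryagin) lifts can degenerate under the dilations, and mere convergence of metrics does not match the limiting geodesic flows. The actual route is algebraic: projective equivalence yields a fiber-preserving orbital diffeomorphism whose fiber components satisfy the fundamental algebraic system \eqref{eq:3.8}; taking weighted-homogeneous principal parts of the matrix and the right-hand side produces the nilpotentized system $\hat A\Psi=\hat d$, whose unique solution encodes the equivalence data on the Carnot group (this is exactly what Lemma \ref{le:APsi=d} and the remark following it carry out in the conformal case). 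Note that what survives the blow-up must be computed, not assumed: even in the conformal case the factor nilpotentizes to the \emph{affine} function $\hat\alpha$, not to the constant $\alpha(q_0)$. Second, your appeal to ``Levi-Civita--Benenti type equations'' is circular in the sub-Riemannian setting: there is no Levi-Civita connection on $(D,g)$, and the classical fact that the eigendistributions of the Benenti tensor are integrable and yield a local product decomposition is precisely the Riemannian theorem whose Carnot-group analog is the assertion to be proved. In \cite{jmz2018} the vanishing of the graded brackets between distinct eigenspaces of $\hat L$ is extracted from the nilpotentized algebraic system together with divisibility and homogeneity arguments on the fibers of the cotangent bundle, not from connection-based structural ODEs, and your sketch supplies no substitute for this step. (Two smaller repairs: $q_0$ must be chosen a \emph{regular} point so that the nilpotent approximation arguments apply --- regular points form an open dense set, so this is harmless --- and the eigendistributions $E_i$ are only defined on the open dense subset of $V$ where the eigenvalue multiplicities of $L$ are locally constant.)
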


In light of these results,  it is natural to ask whether conformally projective rigidity can be replaced by just projective rigidity in both of these theorems. In the Riemannian case  H.~Weyl in 1921 (\cite{W}) demonstrated that for $\dim\, M>1$, if two Riemannian metrics are conformal and have the same geodesics up to a reparametrization, then one metric is a constant scaling of the other one.\footnote{In fact this is a simple consequence of the Levi-Civita classification in \cite{Levi-Civita1896} which was written much earlier than \cite{W} but we prefer to relate it first to H.~Weyl as the great founder of both conformal and projective geometry.}

\begin{Def}
	A metric $g$ is said to be \emph{Weyl projectively rigid} if  any metric, which is simultaneously conformal to $g$ and projectively equivalent to $g$ is constantly proportional to $g$.
\end{Def}

So, in this terminology the Weyl theorem says that for $\dim\, M>1$ any Riemannian metric is Weyl projectively rigid. While in Riemannian case the proof of this result is rather trivial, it is not known yet whether the same statement is true for all sub-Riemannian metrics. The main problem that arises in trying to prove this statement for the general sub-Riemannian case is the presence of abnormal extremals.
The main objective of the present  paper is to study \emph{under what conditions are sub-Riemannian metrics Weyl projectively rigid.} Studying the solvability of the equations for projective equivalence, one inevitably arrives to the questions of divisibility of certain polynomials on the fibers of the cotangent bundle $T^*M$, so it is natural to complexify the picture by complexifying not only the fibers of $T^*M$ but the manifold $M$ itself which is possible, at least locally, under assumption that $M$ is real analytic. This leads naturally to the necessity to consider the notion of complex extremals and geodesics. Very roughly speaking, we show that a real analytic sub-Riemannian metric is Weyl projectively rigid either if the underlying distribution does not have too much complex abnormal geodesics through a point or does not have too much complex \emph{non-strictly} normal geodesics, i.e. complex normal geodesics which are simultaneously abnormal. Our condition are already enough to prove the Weyl projective rigidity for appropriate generic class of sub-Riemannian metrics in real analytic category (see Theorems \ref{genthm1} and  \ref{genthm2}) below) so that it is possible to replace conformally projective rigidity by projectively rigid in Theorems \ref{th:generic_metric} and \ref{th:generic_distrib}.

Now we will describe our results in more detail.
Assume that $(M,D, g)$ is a real analytic sub-Riemannian structure. Locally (i.e.\ in a neighborhood of any points of $M$) we can consider a complex manifold $\mathbb C M$, a \emph{complexification of $M$}, by extending the transition maps between charts, which are real analytic by definition, to analytic functions. We can extend locally the (real-analytic) distribution $D$ and sub-Riemannian metric $g$ to the (complex) analytic distribution $\mathbb CD$ and  a field of symmetric forms $g^\mathbb {C}$ on each fiber of this distribution.

We can also consider the (complex) cotangent bundle $T^*\C M$ of $\C M$ whose fibers are complex vector spaces. We can extend the sub-Riemannian Hamiltonian $h$ defined by \eqref{Ham} analytically to the complex Hamiltonian $h^\mathbb C$ on this bundle and consider the corresponding complex Hamiltonian vector field. The \emph{complex normal extremals} are by definition the integral curves of this vector field and the \emph{complex normal geodesics} are projections of these integral curves to $\mathbb \C M$.
\begin{rk}
Note that after the complexification the zero-level set $(h^{\mathbb C})^{-1}(0)$ of the complex sub-Riemannian Hamiltonian $h^\mathbb C$ is strictly larger that the annihilator $\mathbb C D^\perp$. The integral curves of the complexified Hamiltonian lying in $(h^{\mathbb C})^{-1}(0)\backslash \mathbb CD^\perp$ play the same role as  null geodesics in the pseudo-Riemannian geometry and, in particular, they are the same, up to reparameterization, for all sub-Riemannian metrics from the same conformal class. We will call them and their projections to $\mathbb C M$  the \emph {complex null normal extremals and geodesics}, respectively.
\end{rk}

Also, we can define  Jacobi curve and the corresponding osculating flag for every complex normal extremal. Further,  manipulating with  the annihilators $(\mathbb C D)^\perp$ of the complex distribution $\mathbb CD$ in the complex cotangent bundle of $\C M$, similarly to the standard real case, we can define complex abnormal, and consequently the strictly normal sub-Riemannian geodesics  (see section \ref{gensec}  for more detail).  

We also need the notion of a corank of a geodesics. From now on by dimensions we will mean complex dimensions.
Given a complex normal geodesic $\gamma$  of a sub-Riemannian metric $g$ we say that a complex Pontryagin normal extremal projected to $\gamma$ as parameterized curve is a  \emph{Pontryagin normal lift of $\gamma$}.  Given an abnormal geodesic $\gamma$ of a distribution $D$  we say that a complex Pontryagin abnormal extremal projected to $\gamma$  is a  \emph{Pontryagin abnormal lift of $\gamma$}. The projection of this lift to the projectivized cotangent bundle $\mathbb P T^*M$ will be called the \emph{projectivized Pontryagin abnormal lift of $\gamma$}.  The
corank of a
complex normal  geodesics $\gamma$  of a sub-Riemannian metric $g$ is by definition the dimension of the affine space of
the
normal Pontryagin lifts of $\gamma$.
The
corank of a normal geodesic is a nonnegative integer.

The corank of a complex abnormal geodesic $\gamma$ of a distribution $D$ is by definition the dimension of the (vector) space of all its abnormal lifts.
The corank of an abnormal geodesic is a positive integer.

\begin{rk}
	\label{abnrem1}
	Note that if $\bigl(\gamma(t), p_1(t)\bigr)$ and $\bigl(\gamma(t), p_2(t)\bigr)$ are two distinct normal lifts of a normal geodesics $\gamma$ (which are either both null or both non-null), then   $\bigl(\gamma(t), p_2(t)-p_1(t)\bigr)$ is  an abnormal lift of $\gamma$. Similarly, if  $\bigl(\gamma(t), p_1(t)\bigr)$ is an  abnormal lift of a geodesic $\gamma$ and $\bigl(\gamma(t), p_2(t)\bigr)$  is a normal lift of $\gamma$, then $\bigl(\gamma(t), p_1(t)+p_2(t)\bigr)$  is a normal lift of $\gamma$.
\end{rk}

From the previous remark it follows that  a normal geodesic $\gamma$  is simultaneously an abnormal geodesic if and only if its corank is greater than $0$ and in  this case the corank of $\gamma$  as normal geodesics is equal to  the corank  of $\gamma$  as abnormal geodesics.

Further, if given positive integers $\kappa$ and $s$, there exists a nonzero number $c\in\mathbb C$ and  a $(s+ \kappa +1)$-dimensional submanifold of  the $c$-level set $(h^{\mathbb C})^{-1}(c)$ of the complexified sub-Riemannian Hamiltonian $h^{\mathbb C}$ which is  foliated by complex normal extremals of corank $\kappa$, then we say that the projections of these extremals to $M$ form an \emph{$s$-parametric family of complex
	normal geodesics of corank $\kappa$}.

Also note that the nilpotent approximation (even of a smooth but not real analytic sub-Riemannian structure) is always real analytic as it is a left-invariant structure on a nilpotent Lie group. The following two theorems are our main results on Weyl projective rigidity in terms of normal geodesics:

\begin{Theorem}
	\label{Weylcor2} Assume that $(M,D,g)$ is a smooth sub-Riemannian manifold such that its nilpotent approximation at every point  of an open and dense subset of $M$ satisfies the following property: for every positive $\kappa \in \N$, there is no $(n-2-\kappa)$-parametric family of corank $\kappa$ 
	 non-strictly normal complex geodesics through a point.
	Then the metric $g$ is Weyl projectively rigid.
\end{Theorem}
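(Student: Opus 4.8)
The plan is to show directly that any metric $\tilde g$ which is simultaneously conformal and projectively equivalent to $g$ must be a constant multiple of $g$; writing $\tilde g = e^{2\phi}g$, this amounts to proving that $\phi$ is locally constant. Since $M$ is connected and $\phi$ is smooth, it suffices to prove $d\phi = 0$ on the open dense subset $U\subseteq M$ where the nilpotent approximation satisfies the stated hypothesis.

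First I would translate the two assumptions into a single condition on $\phi$. Conformality fixes the dual Hamiltonian to be $\tilde h = e^{-2\phi}h$, so the $\tilde g$-normal extremals are the Hamiltonian trajectories of $e^{-2\phi}h$. Projective equivalence then requires every $g$-normal geodesic $\gamma$ to be, up to reparameterization, a $\tilde g$-geodesic. Comparing the Hamiltonian equations of $h$ and $e^{-2\phi}h$ over the same base curve, and using that the velocity map $p\mapsto \partial h/\partial p$ has kernel $D^\perp$, any $\tilde h$-lift $\hat p$ of $\gamma$ must differ from a scalar multiple of the $g$-lift $p$ by a covector annihilating $D$, i.e.\ $\hat p = \lambda p + r$ with $r$ a section of $D^\perp$ (cf.\ Remark \ref{abnrem1}); imposing that $(\gamma,\hat p)$ be a genuine $e^{-2\phi}h$-extremal then forces $r$ to be an abnormal lift of $\gamma$ and yields, along each normal extremal, a linear ODE for the pair $(\lambda, r)$ driven by $d\phi$. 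The crucial output is a dichotomy: if $\gamma$ is strictly normal (corank $0$) then $r\equiv 0$ is forced and the system over-determines $\phi$, collapsing to an algebraic condition $Q_\phi(p)=0$ on the fiber, where $Q_\phi$ is a fixed polynomial built from $d\phi$ (the sub-Riemannian analogue of the term $g_{ij}g^{kl}\phi_l$ appearing in the Riemannian proof of Weyl's theorem); whereas if $\gamma$ is non-strictly normal, the abnormal freedom in $r$ absorbs the forcing and no constraint on $\phi$ survives.

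The core of the argument is then a dimension count after complexification. On the $c$-level set of $h^{\C}$, the strictly normal covectors through a point are forced into the hypersurface $\{Q_\phi = 0\}$. If $d\phi\ne 0$ at the point then $Q_\phi\not\equiv 0$, so this hypersurface is proper and every covector off it is necessarily non-strictly normal; tracking the corank stratification together with the flow and lift directions, I would show that the non-strictly normal covectors of some corank $\kappa$ sweep out a family of complex geodesics through the point of dimension at least $n-2-\kappa$. This is exactly the family excluded by the hypothesis, giving a contradiction, so $d\phi = 0$ at every point of $U$.

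Finally, for the smooth (non-analytic) category one cannot complexify $M$, so I would run the previous analysis on the principal symbol of the obstruction. In privileged coordinates at $q_0\in U$ the leading parts of the equation $Q_\phi(p)=0$ and of the abnormal and non-strictly normal loci are governed by the nilpotent approximation, which is real analytic and hence may be complexified even though $M$ is not. A nonzero $d\phi(q_0)$ would therefore produce, at leading order, precisely an $(n-2-\kappa)$-parametric family of non-strictly normal complex geodesics of the nilpotent approximation, contradicting its assumed property; thus $d\phi\equiv 0$ on $U$ and $\phi$ is constant. I expect the main obstacle to be the middle step: deriving the $(\lambda,r)$-system and proving the sharp dichotomy ``strictly normal $\Rightarrow$ algebraic constraint, non-strictly normal $\Rightarrow$ free,'' and then carrying out the corank-stratified dimension count so that it yields exactly the threshold $n-2-\kappa$. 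The passage to the nilpotent approximation, while technical, should be comparatively routine once the symbol computation is in place and the analyticity of the nilpotent approximation is used to complexify.
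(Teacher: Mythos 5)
Your middle step---the dichotomy ``strictly normal $\Rightarrow$ algebraic constraint $Q_\phi(p)=0$, non-strictly normal $\Rightarrow$ free''---is false in the genuinely sub-Riemannian setting, and it is the load-bearing step of your argument. First, the covector $r=\hat p-\lambda p$ does lie in $D^\perp$ along the curve, but it is \emph{not} forced to be an abnormal lift: Remark \ref{abnrem1} applies to differences of normal lifts of the \emph{same} Hamiltonian, whereas $\hat p$ is a lift for $e^{-2\phi}h$, so $r$ solves an \emph{inhomogeneous} transport equation driven by $d\phi$, of which abnormal lifts are solutions of the homogeneous part. Strict normality therefore yields at most \emph{uniqueness} of $r$, never $r\equiv 0$. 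In the paper's language, $r$ is exactly $\Psi=\widetilde{\Phi}-\alpha\widetilde{u}$, the solution of the fundamental algebraic system \eqref{eq:3.8}, and it is generically nonzero even along strictly normal extremals: equation \eqref{eq:3.6'} shows that the quadratic in $u$ built from the $X_i(\alpha)$ --- precisely your candidate $Q_\phi$, the analogue of the Riemannian term $g_{ij}g^{kl}\phi_l$ --- is absorbed by the weight-two components $\Psi_k$ as soon as $D\neq TM$. The pointwise Riemannian argument transfers only in the case $m=n$, where no $\Psi$-components exist. Consequently your dimension count has no foundation (there is no hypersurface off which covectors are forced to be non-strictly normal), and it is also missing the flow-invariance needed to convert a fiberwise set of covectors into a \emph{family of geodesics} through a point.

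The paper's mechanism is essentially the reverse of yours, and requires two ingredients absent from your sketch. The non-strictly normal locus is not the complement of a hypersurface but a codimension-one set: solving \eqref{eq:3.8} makes the $\Psi_k$ \emph{rational} on the fibers, and if the reduced denominator has a non-constant irreducible factor $\delta_1$, one shows $\vec h(\delta_1)$ is divisible by $\delta_1$, so the zero set of $\delta_1$ is invariant under the Hamiltonian flow and foliated by extremals along which the Jacobi filtration $J^{(k)}$ stays degenerate; analyticity and \cite[Prop.\ 3.12]{ref2} then force their projections to be non-strictly normal, and intersecting with a fiber and a level set of $h^{\mathbb C}$ produces the forbidden $(n-2-\kappa)$-parametric family. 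Second, even in the unobstructed case where $\Psi$ \emph{is} polynomial, concluding $X_i(\alpha)(q_0)=0$ is not a pointwise fiber identity: it is the separate weighted-degree telescoping computation of Lemmas \ref{le:APsi=d} and \ref{prop:poly}, which over the nilpotent approximation yields $K_i(1)=(m-1)\alpha^i$ and $K_i(1)=-(n-m)\alpha^i$, hence $\alpha^i=0$. Your final reduction of the smooth case to the (real analytic, complexifiable) nilpotent approximation does match the paper's strategy, but it cannot be salvaged without replacing your dichotomy by this rational-function/divisibility machinery.
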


\begin{Theorem}
	\label{Weylcor3} 	
	Assume that $(M,D,g)$ is a real analytic sub-Riemannian manifold such that there is no open set $U$ in $\mathbb CM$ with the following property: for some positive integer $\kappa \leq n-2$ through any point $q\in U$  there is an   $(n-2-\kappa)$-parametric family of corank $\kappa$ non-strictly normal complex geodesics. Then the metric $g$ is Weyl projectively rigid.
\end{Theorem}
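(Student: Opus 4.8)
The plan is to argue by contradiction on the conformal factor. Suppose $\tilde g$ is simultaneously conformal and projectively equivalent to $g$, and write $\tilde g = e^{2\phi} g$ with $\phi$ real analytic; then $\tilde g$ is constantly proportional to $g$ exactly when $\phi$ is constant. Since $M$ is connected and everything is real analytic, it suffices to prove $d\phi = 0$ on a nonempty open set. Because the hypothesis is phrased on the complexification, I would extend $g$, $\tilde g$, $\phi$ and the Hamiltonians holomorphically to $\mathbb C M$ (writing $h$ for $h^{\mathbb C}$ and $\tilde h = e^{-2\phi}h$), so that the whole argument takes place on $T^*\mathbb C M$ and is fiberwise polynomial in the momentum $p$. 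The goal then becomes: show that the locus $U=\{d\phi\neq 0\}$ is empty.

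First I would record the infinitesimal form of projective equivalence. If a complex normal geodesic of $g$ with lift $(q(t),p(t))$ and the same geodesic of $\tilde g$ with lift $(q,\tilde p)$ share their base curve up to reparametrization, then matching the velocities $\partial h/\partial p$ and $\partial \tilde h/\partial \tilde p$ — which see only the $\mathbb C D$-part of the momentum through the cometric $G$ (with $\ker G=(\mathbb C D)^\perp$) — forces
\begin{equation}
\tilde p = e^{2\phi}\mu\, p + \nu, \qquad \mu\in\mathbb C\setminus\{0\},\ \nu\in(\mathbb C D)^\perp,
\end{equation}
where $\mu$ is the reparametrization factor and $\nu$ an abnormal covector. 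By Remark \ref{abnrem1}, the geodesic is \emph{non-strictly normal} precisely along those lifts for which $\nu\neq 0$. Imposing in addition that $(q,\tilde p)$ be an integral curve of the Hamiltonian field $\vec{\tilde h}$ over the same base curve — i.e.\ the adjoint equation $d\tilde p/ds=-\partial\tilde h/\partial q$ — and substituting the relation above together with $\dot p=-\partial h/\partial q$, I would eliminate the Euler/reparametrization direction (which carries a factor $h$) and the $(\mathbb C D)^\perp$-directions, arriving at a polynomial identity in $p$ on each fiber. This is exactly the passage to \emph{divisibility of polynomials on the fibers of $T^*\mathbb C M$} alluded to in the introduction.

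The crux is to read off from this identity the obstruction carried by $d\phi$. As in Weyl's Riemannian computation, the conformal change contributes to the geodesic equation a term proportional to $h\cdot G\,d\phi$ (the horizontal gradient of $\phi$), which is \emph{not} of projective type; it can be reconciled with the reparametrization and the abnormal shift $\nu$ only along a proper subvariety of each level quadric. Concretely, if $d\phi(q_0)\neq 0$ then on a neighborhood $U$ this obstruction forces, inside each fiber quadric $(h^{\mathbb C})^{-1}(c)\cap T_q^*\mathbb C M$ (of dimension $n-1$), the vanishing of a nontrivial polynomial; the covectors on the resulting $(n-2)$-dimensional subvariety are exactly those for which $\nu\neq 0$ must be taken, i.e.\ those whose geodesics admit a nonzero abnormal lift and are therefore non-strictly normal. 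Stratifying this subvariety by the corank $\kappa$ of the abnormal lift and factoring out the $\kappa$-dimensional affine space of abnormal shifts, I would obtain, through every $q\in U$, an $(n-2-\kappa)$-parametric family of corank-$\kappa$ non-strictly normal complex geodesics for some $\kappa\le n-2$ — exactly the configuration excluded by hypothesis. This contradiction makes $U$ empty, so $d\phi\equiv 0$, whence $\phi$ is constant and $\tilde g=c\,g$.

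The main obstacle is this middle step: turning the differential condition of projective equivalence into the correct fiberwise polynomial identity, and proving that a nonzero $d\phi$ yields a genuinely nontrivial obstruction of full codimension one rather than one absorbed by the reparametrization and the abnormal degrees of freedom — the latter being precisely the phenomenon, absent in the Riemannian case, that is controlled by the hypothesis. Equally delicate is the final bookkeeping: verifying that the locus forcing $\nu\neq 0$ has the exact dimension $n-2$ in the fiber and that, after quotienting by the corank-$\kappa$ abnormal shifts, the resulting family is $(n-2-\kappa)$-parametric in the precise sense of the definition of an $s$-parametric family above (a submanifold of dimension $s+\kappa+1$ of a level set foliated by corank-$\kappa$ extremals), so that the hypothesis of the theorem applies verbatim.
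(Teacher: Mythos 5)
Your overall architecture does match the paper's: argue by contradiction, complexify, produce a fiberwise-polynomial zero locus swept out by complex normal extremals projecting to non-strictly normal geodesics, intersect it with a fiber and a level quadric of $h^{\mathbb C}$ to get an $(n-2)$-dimensional set of covectors, and quotient by the corank-$\kappa$ redundancy of lifts to exhibit the forbidden $(n-2-\kappa)$-parametric family. But the two steps you yourself flag as ``the main obstacle'' are precisely the content of the proof, and your sketch asserts rather than supplies them. First, the fiberwise polynomial structure is not obtained by eliminating terms in the adjoint equation along a single pair of lifts $\tilde p = e^{2\phi}\mu p + \nu$ (where, incidentally, $\mu$ is a function along the curve, not a constant): the paper invokes the existence of an orbital diffeomorphism $\Phi$ between the two extremal flows (\cite[Prop.\ 3.4]{jmz2018}), whose vertical components $\Psi_k=\Phi_k-\alpha u_k$ satisfy the infinite fundamental algebraic system \eqref{eq:3.8}; Cramer's rule on a truncation then yields $\Psi_k=p_k/\delta$ with $\delta$ a maximal minor, and the whole argument lives in the factorial ring $\mathcal{O}_n[z_1,\dots,z_n]$. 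Second, nothing in your proposal ties $d\phi\neq 0$ to the existence of the bad locus: the paper does this in two genuinely nontrivial installments, namely (i) Proposition~\ref{th:poly} and Lemma~\ref{prop:poly}, which show via the nilpotent approximation (the telescoping identities $K_i(s-1)=K_i(s)-(n_s-n_{s-1})\alpha^i$ forcing $(m-1)\alpha^i=-(n-m)\alpha^i$, hence $\alpha^i=0$) that a fiberwise-polynomial $\Psi$ forces $\alpha$ locally constant, and (ii) the divisibility argument of Theorem~\ref{mainthm}: if $\Psi$ is not polynomial, an irreducible factor $\delta_1$ of the reduced denominator satisfies $\vec{h}(\delta_1)$ divisible by $\delta_1$, so $\vec{h}$ is tangent to the smooth stratum $\mathcal{S}_2$ of $\{\delta_1=0\}$, which is therefore foliated by normal extremals. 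Without (i) your contradiction has no starting point, and without (ii) you have no invariant variety at all.

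Moreover, your claim that the covectors on this locus are ``exactly those for which $\nu\neq 0$ must be taken, i.e.\ those whose geodesics admit a nonzero abnormal lift'' is unsubstantiated and is not how non-strict normality is established. In the paper it comes from a different mechanism entirely: $\delta_1$ is shown to divide \emph{all} maximal minors of the truncated matrix (via uniqueness of the lowest-terms representation of $\Psi_j$), whence by \cite[Lemma 3.12]{jmz2018} the Jacobi-type filtration satisfies $\dim J^{(n-m)}_{\lambda(t)}<2n$ along the extremals in $\mathcal{S}_2$, and in the real analytic category this characterizes non-strictly normal geodesics by \cite[Prop.\ 3.12]{ref2}. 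Finally, your dimension bookkeeping treats only the transversal case: the paper must (and does) handle separately the degenerate case where $\mathcal{S}_2$ lies entirely inside one level set of $h$, in which case $\mathcal{S}_2$ is open in $h^{-1}(c)$ and the fiber intersection has dimension $n-1$ rather than $n-2$; your sketch would silently fail there. So the proposal is a correct high-level roadmap that coincides with the paper's strategy, but the analytic core --- orbital diffeomorphism, fundamental algebraic system, the nilpotent-approximation computation, the $\vec h$-invariance of the irreducible divisor's zero set, and the Jacobi-curve criterion --- is missing, and these gaps are not routine to fill.
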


These two theorems will be proved in section \ref{famsec}, based on more general but technical Theorem \ref{mainthm}, proved in section \ref{mainthmsec}.

	The conclusion of Theorems \ref{Weylcor2} and \ref{Weylcor3} for a sub-Riemannian structure holds in particular when either the sub-Riemannian structure is smooth, and its nilpotent approximation do not have complex non-strictly normal geodesics or the sub-Riemannian structure is real analytic and  does not have complex non-strictly normal geodesics. The latter holds generically. It follows from the complex analog of \cite[Proposition 2.22]{cjt08}, which has literally  the same proof. To summarize, we have the following result on genericity of the Weyl rigidity:


\begin{Coro}
	\label{genthm1}	
	Let $M$ be a real analytic manifold and $D$ be a real analytic distribution on $M$ of rank greater than $1$. A generic real analytic sub-Riemannian metric on $(M,D)$ is Weyl  rigid.
\end{Coro}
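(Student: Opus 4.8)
The plan is to deduce Corollary \ref{genthm1} from Theorem \ref{Weylcor3} by showing that, for a generic real analytic sub-Riemannian metric, the complexified structure has \emph{no} complex non-strictly normal geodesics at all — which in particular rules out the existence of any open set $U\subset\mathbb C M$ carrying an $(n-2-\kappa)$-parametric family of corank $\kappa$ non-strictly normal complex geodesics for any positive $\kappa$. Since the hypothesis of Theorem \ref{Weylcor3} is the absence of such a family, establishing generic absence of non-strictly normal geodesics entirely is a (stronger than) sufficient condition, and Weyl rigidity follows immediately.

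The key observation, as the excerpt already indicates, is that a complex normal geodesic is non-strictly normal precisely when it admits an abnormal lift, i.e.\ when it has positive corank; by Remark \ref{abnrem1} this is governed entirely by the abnormal extremals of the underlying distribution $D$, which do not depend on $g$. So the first step is to recall the structure of abnormal extremals and invoke the complex analog of \cite[Proposition 2.22]{cjt08}, which (the excerpt asserts) has literally the same proof: generically, the distribution $D$ admits no nontrivial abnormal extremals outside the zero section — or more precisely, the generic metric has the property that its normal geodesics are all strictly normal. The second step is to transport this genericity statement across complexification: because the metric $g$, the distribution $D$, and the manifold $M$ are real analytic, the complexified data $\mathbb C D$, $g^{\mathbb C}$, $h^{\mathbb C}$ are obtained by analytic continuation, and the condition ``admits a non-strictly normal complex geodesic'' is expressed by the vanishing of a real analytic expression whose complexification is holomorphic; hence absence over the reals for a generic metric propagates to absence over the complexification by the identity principle for analytic functions.

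The third step is to phrase genericity correctly in the real analytic category. Here the natural topology is the Whitney topology on real analytic metrics, and the target is to show that the set of metrics having a complex non-strictly normal geodesic is contained in a proper analytic (hence meager) subset; this is exactly the content of the cited complex analog of \cite[Proposition 2.22]{cjt08}. I would therefore state precisely that for rank greater than $1$ the generic metric has only strictly normal geodesics — the rank hypothesis $\mathrm{rank}\,D>1$ guarantees that the abnormal polynomial conditions cut out a nontrivial constraint so that the complement is genuinely generic. Combining this with Theorem \ref{Weylcor3} yields the conclusion.

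The main obstacle I anticipate is the rigorous verification that the complex analog of \cite[Proposition 2.22]{cjt08} indeed holds with the same proof, specifically that the genericity argument, originally formulated for \emph{real} abnormal extremals, survives complexification without the appearance of new complex non-strictly normal geodesics that have no real counterpart. Intuitively this is fine because the defining equations are polynomial/analytic and complexification is just extension of scalars, but one must be careful that the \emph{generic} transversality or dimension-count used in the real statement is not destroyed when passing to the complex zero set, where the vanishing locus can be larger. Since the excerpt explicitly delegates this to the cited proposition and asserts the proof carries over verbatim, I would limit myself to invoking it, remarking only that the analyticity of all the data is what makes both the complexification and the identity-principle propagation legitimate.
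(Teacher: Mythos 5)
Your overall architecture coincides with the paper's: Corollary \ref{genthm1} is deduced from Theorem \ref{Weylcor3} together with the fact that a generic real analytic metric on $(M,D)$ has no complex non-strictly normal geodesics at all, which is stronger than the hypothesis of Theorem \ref{Weylcor3}; the paper obtains that fact as the complex analog of \cite[Proposition 2.22]{cjt08}. However, your ``second step'' --- transporting the genericity statement from the real to the complex setting by the identity principle --- is a step that would fail, and it is exactly where the substance of the argument lies. The identity principle runs in the opposite direction only: a holomorphic function vanishing on a real set with an accumulation point vanishes identically, but a real analytic expression that is nowhere zero at real points can perfectly well vanish at complex points (think of $1+x^2$; the paper's own remark that $(h^{\mathbb C})^{-1}(0)\setminus \mathbb{C}D^\perp$ is strictly larger than in the real picture is another instance of complexification creating new loci). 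Hence ``a generic metric has only strictly normal \emph{real} geodesics'' does not propagate to the same statement for \emph{complex} geodesics; new complex non-strictly normal geodesics with no real counterpart are precisely what must be excluded, and your closing paragraph names this worry without resolving it.

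The paper's resolution is not a transfer argument but a re-proof: the proof of \cite[Proposition 2.22]{cjt08} is claimed to carry over \emph{literally} to the complex category, which is legitimate because the conditions defining abnormal lifts, and their coincidence with normal lifts (cf.\ Remark \ref{abnrem1}), are algebraic with respect to the fibers of $T^*M$ and therefore complexify, with the relevant transversality and dimension counts insensitive to the change of ground field --- the same observation the paper makes for Theorem \ref{cjtthm} relative to \cite[Theorem 2.4]{chitour}. To repair your proposal, delete the identity-principle step and instead invoke (or verify) the complex analog of \cite[Proposition 2.22]{cjt08} directly, which yields that a generic real analytic metric admits no complex non-strictly normal geodesics; your deduction from Theorem \ref{Weylcor3} is then correct as stated. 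A minor additional slip: ``generically, the distribution $D$ admits no nontrivial abnormal extremals'' is false and not what is needed --- $D$ is fixed, its abnormal extremals are metric-independent, and the genericity is over metrics, as your own parenthetical correction indicates.
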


Now we formulate our main results on Weyl projective rigidity  in terms of complex abnormal extremals. We will use the notion of an abnormal extremal of minimal order, introduced in \cite{chitour}, see Definition \ref{minorddef} below.  The condition of minimal order implies in particular that on a set of full measure such abnormal extremal is tangent to a prescribed line or equivalently, on a set of full measure the germ of the extremal at any point of this set is uniquely determined by this point.
The following two theorems will be proved in section \ref{gensec}.

	


\begin{Theorem}
	\label{Weylcor2a} Assume that $D$ is a smooth distribution on a connected manifold $M$ such that its nilpotent approximation at every point   of an open and dense subset of $M$ satisfies the following properties:
		every complex abnormal extremal of the nilpotent approximation  is of minimal order.
	Then any smooth sub-Riemannian  metric on $D$ is Weyl projectively rigid.
\end{Theorem}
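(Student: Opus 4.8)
The plan is to deduce Theorem \ref{Weylcor2a} from the already established smooth criterion of Theorem \ref{Weylcor2} by showing that the minimal‑order hypothesis forces the absence of the large families of non‑strictly normal geodesics required there. The key preliminary remark is that complex abnormal extremals, their coranks, and the property of being of minimal order depend only on the distribution $D$ and not on the choice of metric; in particular they are intrinsic to the nilpotent approximation of $D$, which is metric‑independent. Now fix an arbitrary smooth metric $g$ on $D$. For its nilpotent approximation a \emph{non‑strictly normal} complex geodesic is, by definition and by Remark \ref{abnrem1}, in particular a complex abnormal geodesic, and these abnormal geodesics coincide with the abnormal geodesics of the (metric‑free) nilpotent approximation of $D$. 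Consequently, to verify the hypothesis of Theorem \ref{Weylcor2} at every point of the open dense set on which the minimal‑order assumption holds, it suffices to bound, uniformly in $g$, the dimension of the family of corank‑$\kappa$ complex abnormal geodesics through such a point. This is where the minimal‑order condition enters.

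First I would turn minimal order into a dimension count. By Definition \ref{minorddef}, on a set of full measure each complex abnormal extremal is tangent to a prescribed line, so its germ is uniquely determined by its point in $(\C D)^\perp$. Fix a base point $q_0$ in the open dense set and consider the corank‑$\kappa$ abnormal geodesics through $q_0$. For each such geodesic the abnormal lifts form a $\kappa$‑dimensional subspace of $(\C D)^\perp_{q_0}$, hence a $(\kappa-1)$‑dimensional projective subspace $L\subseteq \mathbb{P}(\C D)^\perp_{q_0}$. If two such geodesics shared a nonzero abnormal lift at $q_0$, the germ‑determination property would force them to coincide; thus distinct geodesics give \emph{pairwise disjoint} projective lift‑subspaces. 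Since $\dim \mathbb{P}(\C D)^\perp_{q_0}=n-m-1$, the union of an $s$‑parameter family of disjoint $(\kappa-1)$‑planes has dimension $s+\kappa-1\leq n-m-1$, whence $s\leq n-m-\kappa$. Therefore the corank‑$\kappa$ abnormal geodesics through $q_0$, and a fortiori the non‑strictly normal ones, form a family of dimension at most $n-m-\kappa$.

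It remains to compare $n-m-\kappa$ with the threshold $n-2-\kappa$ of Theorem \ref{Weylcor2}. For every rank $m\geq 2$ one has $n-m-\kappa\leq n-2-\kappa$, with \emph{strict} inequality as soon as $m\geq 3$; in that case a forbidden $(n-2-\kappa)$‑parametric family cannot occur and Theorem \ref{Weylcor2} applies at once. The borderline case $m=2$ is the main obstacle: here the abnormal bound is exactly $n-2-\kappa$, and saturating it would force \emph{every} corank‑$\kappa$ abnormal geodesic through $q_0$ to be non‑strictly normal. To exclude this I would exploit that being non‑strictly normal is strictly stronger than being abnormal: the rigid minimal‑order germ already prescribes the velocity direction along the geodesic, so the existence of a compatible \emph{normal} lift whose Hamiltonian flow stays on the geodesic is an additional, proper (closed) condition. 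The hard part will be to show that this condition cuts the dimension by at least one—equivalently, that when all abnormal extremals are of minimal order not every abnormal geodesic through a point can simultaneously be normal; this is precisely where the finer content of Definition \ref{minorddef}, beyond mere germ‑determination, must be used (morally, a genuine non‑strictly normal geodesic satisfies Goh‑type constraints incompatible with minimal order on a full‑dimensional set). Granting this, for every $\kappa$ and every $m\geq 2$ the non‑strictly normal families through any point of the open dense set have dimension strictly below $n-2-\kappa$, so Theorem \ref{Weylcor2} yields that $g$ is Weyl projectively rigid; as $g$ was arbitrary, every smooth sub‑Riemannian metric on $D$ is Weyl projectively rigid.
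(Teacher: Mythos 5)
Your overall strategy --- feeding a dimension bound on families of non-strictly normal geodesics into Theorem \ref{Weylcor2} --- is a genuinely different route from the paper, but it has a gap that you yourself flag and that is in fact fatal, namely the case $m=2$. Your fiberwise count gives $s\leq n-m-\kappa$, which for $m=2$ is exactly the threshold $n-2-\kappa$ of Theorem \ref{Weylcor2}, and your proposed fix (``Goh-type constraints incompatible with minimal order'') is left entirely unproven (``Granting this\dots''). Worse, it is implausible as stated: for rank-$2$ distributions every abnormal extremal automatically satisfies the Goh condition, so Goh-type constraints impose nothing extra there. And $m=2$ is not a marginal case --- it is precisely where the paper's applications live (Engel distributions, $(2,3,5)$, $(2,3,5,6)$, $(2,3,5,7)$ growth vectors in Corollary \ref{Cor:anyrankrigid}). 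The paper avoids the borderline entirely by never counting in a single fiber: its Lemma \ref{abnrem3} assembles from the foliated zero set of $\delta_1$ a \emph{codimension-one} submanifold $\mathcal S_0$ of $\mathbb P T^*\mathbb C M$, of dimension $2n-2$, foliated by abnormal extremals and contained in $\mathbb P W_D$, and then \eqref{tildeWDdim} gives $\dim \mathbb P\widetilde W_D\leq 2n-m-1\leq 2n-3$ for every $m\geq 2$, a contradiction uniform in $m$. The total-space count is strictly stronger than your fiberwise one exactly where you get stuck.

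There is a second, independent flaw in your counting step: the disjointness of the projectivized lift-subspaces. Minimal order (Definition \ref{minorddef}) only says $\Gamma\cap W_D$ has full measure in $\Gamma$; it does \emph{not} guarantee that the particular covector over your base point $q_0$ lies in $W_D$. Outside $W_D$ the kernel of $\sigma|_{\mathbb C D^\perp}$ may have dimension greater than one, so two distinct abnormal extremals, each of minimal order, can cross at such a point; hence ``two geodesics through $q_0$ sharing a nonzero abnormal lift must coincide'' is unjustified, and with it the bound $s+\kappa-1\leq n-m-1$. The paper's Lemma \ref{abnrem3} deals with precisely this issue: it does not work at a fixed fiber point but flows along each abnormal extremal $\Gamma_\lambda$ until it enters $\mathbb P W_D$ (possible by minimal order, and openness via Remark \ref{open}), where the characteristic distribution is a genuine line field and uniqueness of integral curves yields an honestly foliated neighborhood; injectivity is arranged separately by the choice of the section $f(\lambda)$ and the transversal $W$. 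Finally, a minor structural remark: the paper's own proof of Theorem \ref{Weylcor2a} does not invoke Theorem \ref{Weylcor2} at all --- it reruns the Theorem \ref{mainthm} argument on the nilpotent approximation to show the map $\Psi$ of Lemma \ref{le:APsi=d} is polynomial and concludes via Lemma \ref{prop:poly}; your reduction to Theorem \ref{Weylcor2} would be legitimate if the hypothesis verification held, but as it stands it fails exactly at $m=2$.
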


\begin{Theorem}
	\label{Weylcor3a} 	
	Assume that $D$ is a real analytic distribution on a connected manifold  such that every complex abnormal extremal of $D$  is of minimal order.
	Then any real analytic sub-Riemannian  metric on $D$ is Weyl projectively rigid.

\end{Theorem}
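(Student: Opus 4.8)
The plan is to derive Theorem~\ref{Weylcor3a} from Theorem~\ref{Weylcor3} by showing that the minimal-order hypothesis on the complex abnormal extremals of $D$ forbids the large families of non-strictly normal geodesics that obstruct Weyl rigidity. Working locally on a complexification $\C M$, recall from the discussion following Remark~\ref{abnrem1} that a non-strictly normal geodesic is exactly a normal geodesic which is simultaneously abnormal, and that its corank as a normal geodesic coincides with its corank as an abnormal geodesic. Hence it suffices to prove that for $D$ there is no open set $U\subseteq \C M$ and positive integer $\kappa\le n-2$ such that through every point of $U$ there passes an $(n-2-\kappa)$-parametric family of corank $\kappa$ abnormal geodesics each of which is also normal. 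Since abnormal geodesics and their coranks depend only on $D$, the resulting bound will be uniform in $g$, which is what lets the conclusion hold for every real analytic metric on $D$.

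First I would convert minimal order into a dimension bound. As recalled in the paragraph introducing Definition~\ref{minorddef}, the assumption that every complex abnormal extremal is of minimal order means that on a set of full measure the germ of a projectivized abnormal extremal is uniquely determined by the point it passes through; by real-analyticity this determinacy propagates to an open dense set, so the projectivized abnormal extremals foliate, by one-dimensional leaves, an open dense subset of the projectivized annihilator $\mathbb P(\C D)^\perp$. Fixing a base point $q$ and restricting this foliation to the fiber $\mathbb P(\C D)^\perp(q)$, of dimension $n-m-1$ where $m=\mathrm{rank}\,D$, I would count as follows: through each point of the fiber that lies on an abnormal extremal there passes, by minimal order, a unique such extremal, while the extremals projecting to a single corank $\kappa$ geodesic meet the fiber precisely along the projectivization of its $\kappa$-dimensional space of abnormal lifts, i.e.\ along a $\mathbb P^{\kappa-1}$. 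Thus the map sending a point of the fiber to the abnormal geodesic through $q$ it determines has $(\kappa-1)$-dimensional fibers, and the corank $\kappa$ abnormal geodesics through $q$ form a family of dimension at most $(n-m-1)-(\kappa-1)=n-m-\kappa$.

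Since $m\ge 2$ we obtain $n-m-\kappa\le n-2-\kappa$, and for $m\ge 3$ this is strict: a corank $\kappa$ abnormal family through $q$ has dimension at most $n-3-\kappa<n-2-\kappa$, so the forbidden $(n-2-\kappa)$-parametric family cannot exist and Theorem~\ref{Weylcor3} applies. The main obstacle is the sharp rank-two case $m=2$, where the abnormal bound $n-2-\kappa$ exactly matches the forbidden dimension, so that the bound on abnormal geodesics alone is insufficient; here one must exploit that the geodesics are required to be \emph{non-strictly} normal, i.e.\ simultaneously normal. Being a normal geodesic forces the existence of a normal lift on a nonzero level set of $h^\C$, a nontrivial closed condition that should cut the family of corank $\kappa$ abnormal geodesics through $q$ by at least one dimension; proving this strict drop---equivalently, that not every abnormal geodesic of a rank-two distribution can be a normal geodesic of the given metric---is the delicate point, after which the dimension falls below $n-2-\kappa$ and Theorem~\ref{Weylcor3} again yields Weyl projective rigidity. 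A secondary technical step, where real-analyticity is essential, is the passage from the full-measure determinacy supplied by minimal order to the foliation statement on an open set used in the count above.
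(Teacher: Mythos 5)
You propose to deduce the theorem from Theorem \ref{Weylcor3} by bounding, fiberwise, the dimension of the family of corank-$\kappa$ abnormal geodesics through a point. This is a genuinely different route from the paper's, but as written it has a real gap, and you name it yourself: the rank-two case. For $m\ge 3$ your count $n-m-\kappa<n-2-\kappa$ does the job (modulo care), but $m=2$ is not a marginal case to defer --- it is the central one, covering the Engel and $(2,3,5)$ examples and the $m=2$ instances of Corollary \ref{genthm2}. Your suggested repair, that simultaneous normality must cut the abnormal family by one dimension (``not every abnormal geodesic of a rank-two distribution can be a normal geodesic of the given metric''), is an unproven claim that would have to hold for \emph{every} real analytic metric on $D$, and nothing in your outline supplies it. Ironically, the borderline is an artifact of an overcount on your side: you take the projectivized fiber of $\C D^\perp$, of dimension $n-m-1$, as the ambient space for abnormal lifts, but by \eqref{tildeWD}--\eqref{tildeWDdescription} abnormal lifts are confined to $\widetilde W_D$, which for even $m$ --- in particular for $m=2$, where $\widetilde W_D$ lies in the annihilator of $D+[D,D]$ --- has positive fiberwise codimension in $\C D^\perp$ over a dense open set of the base; with that correction the bound becomes $n-3-\kappa<n-2-\kappa$ even for $m=2$. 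A secondary imprecision: minimal order gives a unique germ of abnormal extremal only through points of $W_D$ defined in \eqref{WD}, where the characteristic kernel is one-dimensional, not through every fiber point lying on an extremal, and the $\mathbb P^{\kappa-1}$ of projectivized lifts of a corank-$\kappa$ geodesic need not sit inside $W_D$; so your fiber-of-the-map count must be localized to $\mathbb P W_D$.

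The paper's proof avoids the per-point family count entirely and is uniform in $m>1$. Assuming $g$ is not Weyl projectively rigid, the proof of Theorem \ref{mainthm} produces a polynomial $\delta$ whose zero set contains an open piece, of codimension one in $T^*\C M$, foliated by normal extremals projecting to non-strictly normal geodesics. Lemma \ref{abnrem3} is the key passage from normal to abnormal: using Remark \ref{abnrem1} (the difference of two normal lifts is an abnormal lift), a codimension-one slice $W$ and an injectivity arrangement for $\lambda\mapsto[f(\lambda)-\lambda]$, it builds a codimension-one submanifold $\mathcal S_0$ of $\mathbb P T^*\C M$ foliated by abnormal extremals and contained in $\mathbb P W_D$; the minimal-order hypothesis is used precisely there, to guarantee that the assigned abnormal extremals genuinely foliate an open piece rather than intersect. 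The contradiction is then the single global estimate $2n-2=\dim \mathcal S_0\le \dim \mathbb P\widetilde W_D\le 2n-3$ from \eqref{tildeWDdim}, valid simultaneously for all ranks $m>1$. So your proposal captures the right heuristic for $m\ge 3$, but it does not prove the theorem: the $m=2$ case is left open, and the mechanism you suggest for closing it is neither established nor the one the paper uses.
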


The direct consequence of Theorems \ref{Cor:anyrank} and \ref{Weylcor2a} is the following.

\begin{Coro}
	\label{Cor:anyrankrigid}
	If $D$ is a smooth bracket generating distribution on a connected manifold $M$ such that the nilpotent approximation $\hat D$  of it at every point  of an open and dense subset of $M$ does not admit a product structure and   every complex abnormal extremal of $\hat D$   is of minimal order,
	 then any smooth sub-Riemannian metric on $D$ is projectively rigid.
\end{Coro}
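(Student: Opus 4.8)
The plan is to observe that projective rigidity is logically the conjunction of the two weaker rigidity notions already established, and then simply to invoke Theorems~\ref{Cor:anyrank} and~\ref{Weylcor2a}. Concretely, I would first record the elementary implication at the level of definitions: if a metric $g$ is both \emph{conformally projectively rigid} and \emph{Weyl projectively rigid}, then it is \emph{projectively rigid}. Indeed, let $\tilde g$ be any sub-Riemannian metric on $(M,D)$ projectively equivalent to $g$. Conformal projective rigidity forces $\tilde g$ to be conformal to $g$; hence $\tilde g$ is simultaneously conformal and projectively equivalent to $g$, and Weyl projective rigidity then forces $\tilde g$ to be constantly proportional to $g$. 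Thus $g$ admits no nontrivially projectively equivalent metric, which is exactly projective rigidity.

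With this reduction in hand, the second step is to check that the hypotheses of the corollary guarantee both weaker properties for an arbitrary smooth metric on $D$. The assumption that the nilpotent approximation $\hat D$ fails to admit a product structure on an open and dense subset is precisely the hypothesis of Theorem~\ref{Cor:anyrank}, so every smooth sub-Riemannian metric on $D$ is conformally projectively rigid. The assumption that every complex abnormal extremal of $\hat D$ is of minimal order (again on an open and dense subset) is precisely the hypothesis of Theorem~\ref{Weylcor2a}, so every such metric is Weyl projectively rigid. Since both hypotheses are assumed to hold, I can apply the two theorems to the same $D$ and then combine their conclusions via the implication of the first step.

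There is essentially no analytic content to prove beyond the two cited theorems; the only point requiring a moment of care is that the two hypotheses are imposed on (possibly different) open and dense subsets, but this causes no difficulty, since each theorem needs only its own condition on its own open dense set, and the intersection of two open dense sets is again open and dense. The genuine mathematical obstacle --- controlling abnormal extremals and the divisibility and conformal equations on the complexified cotangent bundle --- has already been dispatched inside Theorems~\ref{Cor:anyrank} and~\ref{Weylcor2a}, so the corollary itself is a short formal deduction rather than a place where new difficulties arise.
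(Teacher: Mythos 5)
Your proposal is correct and coincides with the paper's own reasoning: the paper presents Corollary~\ref{Cor:anyrankrigid} as a direct consequence of Theorems~\ref{Cor:anyrank} and~\ref{Weylcor2a}, combined via exactly the formal implication you spell out (conformally projectively rigid together with Weyl projectively rigid implies projectively rigid). Your additional remark about the two open dense sets is a harmless and accurate clarification, not a deviation.
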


Based on the last corollary we can easily find many new classes of distributions on connected manifolds for which all sub-Riemannian metrics on them are projectively rigid (before this statement was known for contact distributions only (\cite{Z})). For example, this will be true for the following distributions, for which it is easy to see that all possible nilpotent approximations satisfy conditions of Corollary  \ref{Cor:anyrankrigid}:

 \begin{enumerate}
 	\item
\emph{Engel distributions, i.e.\ rank $2$ distributions on $4$-dimensional  manifolds with the small growth vector $(2,3,4)$};
\item \emph{Rank $2$ distributions on $5$-dimensional  manifolds with the small growth vectors $(2,3,5)$;
\item rank $3$ distributions on $5$-dimensional and $6$ dimensional  manifolds with the small growth vectors $(3,5)$ and $(3,6)$, respectively};
\item \emph{Rank $2$ distributions on $6$ dimensional  manifolds with the small growth vectors $(2,3,5,6)$};
\item \emph{Rank $2$ distributions on $7$ dimensional  manifolds with the small growth vectors $(2,3,5,7)$ or $(2,3,5,6,7)$}.
\end{enumerate}

\begin{rk}
Conditions of minimal order in Theorems \ref{Weylcor2a} and \ref{Weylcor3a} can be replaced by much weaker but much more technically formulated condition, see subsection \ref{weaksec}.
\end{rk}


Further, the main result of \cite[Theorem 2.4]{chitour}  states that all (real) abnormal extremals of a generic smooth rank $m$ distribution are of minimal order and corank $1$. This result can be literally extended to the complex abnormal exttremals  of real analytic manifolds), because the genericity condition in \cite[Theorem 2.4]{chitour}  is given by the complement of algebraic conditions with respect to the fibers of $T^*M$ so that  the complexification can be done. So, the following theorem holds.

\begin{Theorem}
	\label{cjtthm}	
	All complex abnormal extremals of a generic rank $m$ real analytic distribution distribution are of minimal order and corank $1$.
\end{Theorem}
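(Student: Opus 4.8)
The plan is to derive the statement from its real counterpart \cite[Theorem 2.4]{chitour} by auditing the mechanism of the latter's proof and checking that it survives the passage from real to complex covectors. Recall how the real result is obtained: one fixes an integer $k$ large enough that the properties ``minimal order'' (Definition \ref{minorddef}) and ``corank $1$'' of the abnormal extremal through $q$ with projectivized abnormal lift $[p]\in\mathbb P\bigl(D^\perp(q)\bigr)$ are each equivalent to the non-vanishing of a finite family of polynomials in $p$ whose coefficients are universal polynomials in the $k$-jet of $D$ at $q$. Forming the incidence set $I=\{(j,[p]):\text{some defining polynomial vanishes}\}$ in the product of the jet fiber $J^k_0$ with $\mathbb P^{\,n-m-1}=\mathbb P(D^\perp)$, Chitour's estimate amounts to a bound of the form $c>n-m-1$, where $c$ is the codimension, in $J^k_0$, of the jets for which one fixed direction $[p]$ is bad. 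Since the fiber $\mathbb P^{\,n-m-1}$ has dimension $n-m-1<c$, the projection $I\to J^k_0$ is not dominant, the bad jets form a proper subset, and Thom transversality yields that a generic distribution has its $k$-jet avoiding $I$ at every point.

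First I would record that every object in this argument is algebraic in the covector variable $p$: the equations of minimal order and of corank $1$, their coefficients as polynomials in the jet of $D$, and therefore the incidence set $I$ itself. Nothing uses that $p$ is real. I would then introduce the complexified incidence set $I^{\C}$, defined by exactly the same polynomials but now with $[p]$ ranging over the complex projective fiber $\mathbb P\bigl(\C D^\perp(q)\bigr)$; by construction $I^{\C}$ parametrizes precisely the complex abnormal extremals of $D$ that fail to be of minimal order and corank $1$, which is what we must show generically do not occur.

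The decisive step is to observe that Chitour's threshold is unchanged by complexification. Over $\C$ the fiber $\mathbb P\bigl(\C D^\perp\bigr)$ has complex dimension $n-m-1$, i.e.\ the same number as the real dimension of $\mathbb P(D^\perp)$; and for a fixed complex direction $[p]$ the jets for which $[p]$ is bad are cut out by the same polynomials, so their codimension is again governed by the algebraic quantity $c$. Hence the inequality ``$c>n-m-1$'' needed to make the projection $I^{\C}\to J^k_0$ non-dominant is literally the one already established in \cite{chitour}. (Equivalently, if one prefers to count real dimensions throughout, complexifying the fiber doubles its real dimension to $2(n-m-1)$, but each complex defining equation imposes two real conditions on the jet, doubling the per-direction codimension to $2c$, so the critical inequality $2c>2(n-m-1)$ is again the same.) Thus the complex bad jets form a proper subset of $J^k_0$, and the transversality/avoidance argument of \cite{chitour} applies verbatim; since $D$ is real analytic and all the conditions are analytic, the resulting genericity is stated in the real analytic category, yielding that a generic real analytic rank-$m$ distribution has all its complex abnormal extremals of minimal order and corank $1$.

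The step I expect to be the main obstacle is precisely this persistence of the codimension bound: a priori a count valid over $\R$ might exploit reality, so the real task is to re-read Chitour's estimate as a bound on the dimension of a genuine algebraic incidence variety — computed from the number and independence of the defining polynomials — rather than as an estimate special to $\R$. This is exactly the content of the phrase that the genericity condition is ``the complement of algebraic conditions with respect to the fibers of $T^*M$'': once the bound is algebraic, it is insensitive to the field, and the dimension bookkeeping above goes through. A secondary, routine point is to invoke the transversality/avoidance mechanism in the real analytic rather than the $C^\infty$ category, which is legitimate because the exceptional set is the real locus of a proper algebraic subvariety of the jet space and hence has positive codimension there as well.
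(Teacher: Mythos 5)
Your proposal is correct and takes essentially the same route as the paper: the paper's entire argument for Theorem \ref{cjtthm} is the one-sentence observation that the genericity condition in \cite[Theorem 2.4]{chitour} is the complement of algebraic conditions with respect to the fibers of $T^*M$, so the real proof complexifies verbatim. Your fleshed-out audit of the incidence-variety codimension count (including the doubled real-dimension bookkeeping and the explicit identification of field-insensitivity of the bound as the only point needing verification) is simply a more detailed rendering of exactly that remark.
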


Combining this theorem with Theorem  \ref{Weylcor2a} we get one more  genericity results on the Weyl rigidity.

\begin{Coro}
	\label{genthm2}	
	Let $m$ and $n$ be two integers such that $2\leq m <n$.	On  a generic real analytic rank $m$ distribution $D$ on a connected  $n$-dimensional real analytic manifold $M$   any sub-Riemannian metric is Weyl projectively rigid.
\end{Coro}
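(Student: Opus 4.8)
The plan is to read off the corollary as the intersection of the genericity statement of Theorem \ref{cjtthm} with the sufficient conditions for Weyl projective rigidity established in Theorems \ref{Weylcor2a} and \ref{Weylcor3a}. First I would fix the generic class $\mathcal G$ of real analytic rank-$m$ distributions on $M$ furnished by Theorem \ref{cjtthm}: for every $D\in\mathcal G$ all complex abnormal extremals of $D$ are simultaneously of minimal order and of corank $1$. Since $\mathcal G$ is, by the construction borrowed from \cite{chitour}, cut out by the complement of a family of algebraic conditions on the fibers of $T^*M$ — that is, by pointwise conditions on the jets of $D$ — it is generic in exactly the sense intended in the statement, so that ``$D\in\mathcal G$'' is what ``generic $D$'' means.

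Next I would verify that membership in $\mathcal G$ supplies the hypotheses of the rigidity theorems. For a real analytic metric this is immediate: if $D\in\mathcal G$ then every complex abnormal extremal of $D$ is of minimal order, which is precisely the hypothesis of Theorem \ref{Weylcor3a}; hence every real analytic sub-Riemannian metric on such a $D$ is Weyl projectively rigid. To reach all smooth metrics I would instead invoke Theorem \ref{Weylcor2a}, whose hypothesis is phrased in terms of the nilpotent approximation $\hat D$ at the points of an open and dense set. The point is that the order of an abnormal extremal at a point is an algebraic invariant of the jet of $D$ at that point, and this jet is exactly the data from which $\hat D$ is built (Definition \ref{minorddef}); consequently the minimal-order condition guaranteed by $D\in\mathcal G$ holds verbatim for the complex abnormal extremals of $\hat D$ at every point of an open and dense subset of $M$. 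Feeding this into Theorem \ref{Weylcor2a} yields Weyl projective rigidity of every smooth sub-Riemannian metric on $D$, which is the asserted conclusion.

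The main obstacle, and the only step that is not pure bookkeeping, is this last compatibility: one must make precise that the minimal-order condition on the abnormal extremals of $D$ descends to the abnormal extremals of its nilpotent approximation, i.e.\ that the two minimal-order hypotheses — for $D$ in Theorem \ref{cjtthm} and for $\hat D$ in Theorem \ref{Weylcor2a} — are governed by the same jet-level algebraic conditions and therefore hold on a common open and dense set. Once the definition of the order of an abnormal extremal is recognized as depending only on the graded (nilpotent) data of $D$ at the base point, this identification is routine, and the remaining combination of Theorem \ref{cjtthm} with Theorem \ref{Weylcor2a} (for smooth metrics) and Theorem \ref{Weylcor3a} (for real analytic metrics) is immediate.
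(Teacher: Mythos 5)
Your first paragraph, together with the real analytic half of your second paragraph, reproduces the paper's actual argument: Theorem \ref{cjtthm} furnishes the generic class of real analytic rank-$m$ distributions all of whose complex abnormal extremals are of minimal order, and this is verbatim the hypothesis of Theorem \ref{Weylcor3a}, which then gives Weyl projective rigidity of every real analytic sub-Riemannian metric on such a $D$. (The paper's text says ``combining with Theorem \ref{Weylcor2a}'', but the hypothesis that Theorem \ref{cjtthm} delivers --- minimal order for the abnormal extremals of $D$ \emph{itself}, not of its nilpotent approximations --- is that of Theorem \ref{Weylcor3a}; in the real analytic setting of the corollary this is the intended combination, and ``any sub-Riemannian metric'' is to be read within the real analytic category, as in the abstract and in Corollary \ref{genthm1}.) Up to this point your proposal is correct and is essentially the paper's one-line proof.

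Your extension to smooth metrics via Theorem \ref{Weylcor2a}, however, contains a genuine gap, and it sits exactly at the step you yourself flag as ``the only step that is not pure bookkeeping''. The bridge you propose --- that minimal order is ``an algebraic invariant of the jet of $D$ at that point, and this jet is exactly the data from which $\hat D$ is built'' --- misreads Definition \ref{minorddef} and is false as stated. Minimal order is not a pointwise jet condition: it says that the abnormal extremal, a curve in $\mathbb{C}D^\perp$, meets the open set $W_D$ in a set of full measure, and $W_D$ is defined through the kernels of $\sigma|_{\mathbb{C}D^\perp}$, hence through the full structure functions of $D$ along the curve. Passing to the nilpotent approximation replaces these structure functions by their weighted principal parts and discards all lower-weight terms, so $W_{\hat D}$ and the abnormal extremals of $\hat D$ are computed from different data; nothing transfers ``verbatim''. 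Nor can you apply Theorem \ref{cjtthm} to $\hat D$ directly: nilpotent approximations form a very special (graded, left-invariant) subclass of distributions, so genericity of $D$ does not make $\hat D$ generic. The paper itself signals that no such transfer is available: Theorems \ref{Weylcor2a} and \ref{Weylcor3a} carry deliberately distinct hypotheses, and the nilpotent-approximation condition in Corollary \ref{Cor:anyrankrigid} is verified only for specific growth vectors (Engel, $(2,3,5)$, etc.), case by case rather than by genericity. You should therefore delete the smooth-metric half of your argument, or restrict the conclusion to real analytic metrics, which is all the paper's proof establishes.
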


Finally, as immediate consequences of Theorems \ref{th:generic_metric} and \ref{genthm1}  and Theorems \ref{th:generic_distrib} and \ref{genthm2}, respectively,  we get the following two genericity results for projective rigidity, improving the main results of \cite{jmz2018}.

\begin{Coro}
	\label{th:generic_metric
		_proj}
	Let $M$ be a real analytic  manifold and $D$ be a distribution on $M$. A generic real analytic  sub-Riemannian metric on $(M,D)$ is projectively rigid.
\end{Coro}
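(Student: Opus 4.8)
The plan is to obtain projective rigidity as the simultaneous consequence of the two generic properties already at our disposal: conformal projective rigidity (Theorem~\ref{th:generic_metric}) and Weyl projective rigidity (Corollary~\ref{genthm1}). The logical core is the elementary observation that, for a single metric, these two rigidity notions together force full projective rigidity. Indeed, suppose a real analytic metric $g$ on $(M,D)$ is at once conformally projectively rigid and Weyl projectively rigid, and let $\tilde g$ be any metric projectively equivalent to $g$. Conformal projective rigidity gives that $\tilde g$ is conformal to $g$; hence $\tilde g$ is simultaneously conformal and projectively equivalent to $g$, and Weyl projective rigidity then yields $\tilde g = c\,g$ for some constant $c>0$. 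Thus $g$ admits no non-trivially projectively equivalent metric, i.e.\ $g$ is projectively rigid.

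First I would record this implication set-theoretically: the set of projectively rigid metrics on $(M,D)$ contains the intersection of the set $\mathcal A$ of conformally projectively rigid metrics with the set $\mathcal B$ of Weyl projectively rigid metrics. Next I would invoke the two genericity statements in the real analytic category. Corollary~\ref{genthm1} guarantees that $\mathcal B$ is residual in the space of real analytic sub-Riemannian metrics (the standing bracket-generating assumption forces $\operatorname{rank} D>1$ on any manifold of dimension exceeding one, so the hypothesis of Corollary~\ref{genthm1} is met), while the analytic counterpart of Theorem~\ref{th:generic_metric} guarantees that $\mathcal A$ is residual in the same space. Finally, since this space, with its natural topology, is a Baire space, the finite intersection $\mathcal A\cap\mathcal B$ of two residual sets is again residual. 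As every metric in $\mathcal A\cap\mathcal B$ is projectively rigid by the previous paragraph, the set of projectively rigid metrics is residual, which is exactly the assertion.

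The step I expect to demand the most care is the matching of categories and the Baire structure, rather than the rigidity implication, which is purely formal. Theorem~\ref{th:generic_metric} is stated in the smooth category, and genericity there does not automatically descend to the (much thinner) space of real analytic metrics; one must therefore verify that the proof of conformal projective rigidity transfers to the analytic setting. This is plausible because the obstructions to projective equivalence are algebraic in the fibers of $T^*M$ and thus survive complexification, exactly as exploited for the complex extremals elsewhere in the paper. In parallel one must pin down the precise meaning of \emph{generic} for real analytic metrics and confirm that the chosen topology renders the space Baire, so that the intersection of two generic sets remains generic. Once these foundational points are settled, the conclusion follows at once.
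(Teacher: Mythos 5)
Your proposal is correct and follows exactly the paper's route: the paper derives this corollary as an immediate consequence of Theorem~\ref{th:generic_metric} (genericity of conformal projective rigidity) combined with Corollary~\ref{genthm1} (genericity of Weyl projective rigidity in the real analytic category), via precisely the formal implication you isolate, namely that a metric enjoying both rigidity properties admits no non-trivially projectively equivalent metric, and that the intersection of two generic sets is generic. Your closing caution about transferring Theorem~\ref{th:generic_metric} from the smooth to the real analytic category is a genuine subtlety that the paper passes over silently, so flagging it and sketching the complexification argument only makes your write-up more careful than the original.
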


\begin{Coro}
	\label{th:generic_distrib_proj}
	Let $m$ and $n$ be two integers such that $2\leq m <n$, and assume $(m,n) \neq (4,6)$ and $m \neq n-1$ if $n$ is even.
	Then, given a smooth  $n$-dimensional manifold $M$ and  a generic smooth rank $m$ distribution $D$ on $M$,
	any sub-Riemannian metric on $(M,D)$ is projectively rigid.
\end{Coro}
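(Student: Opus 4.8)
The plan is to deduce full projective rigidity from the two a priori weaker properties that we already control generically, namely conformal projective rigidity and Weyl projective rigidity. The decisive (and essentially trivial) observation is the implication: \emph{if a sub-Riemannian metric $g$ is simultaneously conformally projectively rigid and Weyl projectively rigid, then it is projectively rigid.} Indeed, let $\tilde g$ be any metric projectively equivalent to $g$ at a point $q_0$. Conformal projective rigidity forces $\tilde g$ to be conformal to $g$; but then $\tilde g$ is at the same time conformal to $g$ and projectively equivalent to $g$, so Weyl projective rigidity forces $\tilde g$ to be constantly proportional to $g$, i.e.\ trivially equivalent to it. Hence $g$ admits no non-trivially projectively equivalent metric, which is exactly projective rigidity.

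With this reduction in hand I would simply intersect the two generic families of distributions furnished by the earlier results. Theorem \ref{th:generic_distrib} produces a generic set $\mathcal{G}_{1}$ of rank $m$ distributions on $M$ (under the stated hypotheses $2\le m<n$, $(m,n)\neq(4,6)$, and $m\neq n-1$ when $n$ is even) on which \emph{every} sub-Riemannian metric is conformally projectively rigid; note that all the arithmetic restrictions on $(m,n)$ enter \emph{only} through this conformal step. On the other hand, Theorem \ref{cjtthm} together with the smooth criterion of Theorem \ref{Weylcor2a}, applied to the nilpotent approximation, produces a second generic set $\mathcal{G}_{2}$ of rank $m$ distributions on which every metric is Weyl projectively rigid, with no restriction beyond $2\le m<n$. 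Since a finite intersection of generic (residual, or open and dense) sets is again generic, $\mathcal{G}_{1}\cap\mathcal{G}_{2}$ is generic, and for every $D\in\mathcal{G}_{1}\cap\mathcal{G}_{2}$ the implication of the first paragraph shows that every sub-Riemannian metric on $(M,D)$ is projectively rigid.

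The step demanding the most care — and the only genuine obstacle — is to ensure that \emph{both} genericity statements live in the \emph{same} function space, since Theorem \ref{th:generic_distrib} is phrased in the smooth category while Corollary \ref{genthm2} is naturally phrased in the real analytic one. This is precisely why one invokes Theorem \ref{Weylcor2a} rather than Theorem \ref{Weylcor3a}: the former yields Weyl projective rigidity of \emph{all smooth} metrics from a condition imposed on the nilpotent approximation, which is automatically real analytic even when $D$ is merely smooth. One has to verify that the minimal-order condition for the complex abnormal extremals of the nilpotent approximation is an algebraic, jet-level, open and dense condition in the fibers of $T^*M$ — exactly as recorded in the discussion preceding Theorem \ref{cjtthm} — so that jet transversality in the Whitney $C^\infty$ topology realizes $\mathcal{G}_{2}$ as a generic set of \emph{smooth} distributions. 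Once both $\mathcal{G}_{1}$ and $\mathcal{G}_{2}$ are exhibited as generic subsets of the space of smooth rank $m$ distributions on $M$, the intersection argument closes the proof, improving the conformal statement of Theorem \ref{th:generic_distrib} (and hence the corresponding result of \cite{jmz2018}) to full projective rigidity.
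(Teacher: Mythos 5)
Your first two paragraphs reproduce the paper's proof exactly: the paper derives Corollary \ref{th:generic_distrib_proj} as an ``immediate consequence'' of Theorem \ref{th:generic_distrib} and Corollary \ref{genthm2}, which is precisely your trivial implication (conformally projectively rigid together with Weyl projectively rigid implies projectively rigid) applied on the intersection of the two generic sets of distributions, and you are also right that the arithmetic restrictions on $(m,n)$ enter only through the conformal ingredient.

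Where you go beyond the paper is your third paragraph, and that is where a genuine problem sits. You correctly observe the category mismatch --- the corollary is stated for smooth distributions while Corollary \ref{genthm2} is a real analytic statement, a tension the paper passes over in silence --- and you propose to repair it by routing the Weyl ingredient through Theorem \ref{Weylcor2a}, which only needs the minimal-order property for the complex abnormal extremals of the (automatically real analytic) nilpotent approximations. But the genericity claim this requires --- that for a generic \emph{smooth} distribution the nilpotent approximations at points of an open dense set have all complex abnormal extremals of minimal order --- is not ``recorded in the discussion preceding Theorem \ref{cjtthm}'': that discussion, and Theorem \ref{cjtthm} itself via \cite[Theorem 2.4]{chitour}, concern the abnormal extremals of the distribution itself and their complexification in the real analytic category, not those of its nilpotent approximations. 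Moreover, no jet-transversality argument in the space of distributions can deliver this property, since the nilpotent approximation of a generic distribution at a generic point is the \emph{free} nilpotent structure --- a single fixed object, not a generic one --- so minimal order of its complex abnormal extremals would have to be verified directly for that structure. As written, then, your smooth-category repair has a gap; if you instead stop after your second paragraph and, like the paper, simply invoke Corollary \ref{genthm2}, you match the paper's own proof, inheriting its unaddressed smooth-versus-analytic looseness rather than resolving it.
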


\section{The fundamental algebraic system in the conformal case}

\subsection {Equations for orbital diffeomorphisms in local coordinates} In this subsection, following  \cite{jmz2018}, we introduce orbital diffeomorphisms between extremal flows and explain their relation to the projective equivalence,  then  deduce the equations for orbital diffeomorphisms in local metric for projective equivalent  and conformal sub-Riemannian metric. All formulas here can be directly derived from the corresponding formulas in  \cite {Z} and \cite{jmz2018}, where the general case (of not necessary conformal but projective equivalent sub-Riemannian metrics) is considered. To make the presentation self-contained we derive all formulas here in the particular conformal case.
	
Let $M$ be a manifold and $D$ be a bracket generating distribution on $M$. We consider two sub-Riemannian metrics on  $(M,D)$ that are both conformal and projectively equivalent. Let us denote these metrics by
 $g$ and $\alpha^2 g$, where $\alpha : M \to \R$ is a never vanishing smooth function. Let $h_1$ and $h_2$ be the sub-Riemannian Hamiltonians
 of $g$ and  $\alpha^2 g$, respectively. Obviously
 \begin{equation}
 \label{hh}
 h_2=\frac{1}{\alpha^2} h_1.
 \end{equation}
Denote  $H_1=h_1^{-1}(1/2)$ and $H_2=h_2^{-1}(1/2)$ the respective $\cfrac{1}{2}$-level sets of these Hamiltonians. Also let $\pi: T^*M\rightarrow M$ be the canonical projection.

One says that $\vec h_1$ and  $\vec h_2$ are \emph{orbitally diffeomorphic}
on an open subset $V_1$ of $H_1$ if there exists an open subset $V_2$ of $H_2$ and a diffeomorphism $\Phi: V_1 \rightarrow V_2$ such that $\Phi$ is fiber-preserving, i.e.\  $\pi (\Phi(\lambda)) = \pi (\lambda)$, and $\Phi$ sends the integral curves of $\vec h_1$ to the reparameterized integral curves of $\vec h_2$, i.e., there exists a smooth function $s=s(\lambda,t)$ with $s(\lambda,0)=0$ such that $\Phi\bigl(e^{t\vec h_1}\lambda\bigr) =e^{s\vec h_2} \bigl(\Phi(\lambda)\bigr)$ for all $\lambda \in V_1$ and $t\in \mathbb R$ for which $e^{t\vec h_1}\lambda$ is well defined. Equivalently, there exists a smooth function $c(\lambda)$ such that
\begin{equation}
\label{orbeq}
d\Phi \circ \vec{h}_1(\lambda) = c(\lambda) \vec{h}_2(\Phi(\lambda)).
\end{equation}
  The map $\Phi$  can be extended as a mapping $\bar \Phi$ from $T^*M \setminus h_1^{-1}(0)$ to itself by rescaling, i.e.,
$$
\bar \Phi (\lambda) = \sqrt{2 h_1(\lambda)} \Phi \left(\frac{\lambda}{\sqrt{2 h_1(\lambda)}}\right).
$$
The resulting map  is called an \emph{orbital diffeomorphism} between the extremal flows of $g$ and  $\alpha^2g$. In  the considered case,  from \eqref{hh} and the fact that $\Phi$ is fiber-preserving it follows immediately that the function $c(\lambda)$ in \eqref{orbeq}  coincides with the function $\aa\circ \pi (\lambda))$, i.e.\ we have
\begin{equation}
\label{orbeqc}
d\Phi \circ \vec{h}_1(\lambda) = \aa\circ \pi (\lambda) \vec{h}_2(\Phi(\lambda)).
\end{equation}

In \cite{jmz2018} we established the relationship between projective equivalence of sub-Riemannian  and orbital equivalence of the corresponding sub-Riemannian Hamiltonians. In particular,
 in Proposition 3.4 there we proved that  there exists a local orbital diffeomorphisms $\Phi$ between the Hamiltonian vector fields associated with $g$ and $\alpha^2 g$ near generic\footnote{In fact in the original formulation in \cite{jmz2018} we used the term ample instead of generic,  see [Definition 2.9] there, but we do not really need this technicalities here.} point of $T^*M$.

Now we will work in coordinates on fibers of $T^*M$ induced by an appropriate local moving frame on $M$. Fix a point $q_0 \in M$ and choose a frame $\{ X_1, \dots, X_n \}$ of $TM$ adapted to $D$ at $q_0$ such that $X_1, \dots, X_m$ is a $g$-orthonormal frame of $D$. At any point $q$ in a neighborhood $U$ of $q_0$, the basis $X_1(q), \dots, X_n(q)$ of $T_q M$ induces coordinates $(u_1,\dots,u_n)$  on $T^*_q M$ defined as $u_i(q,p) = \langle p, X_i(q) \rangle$. These coordinates in turn induce a basis $\partial_{u_1}, \dots, \partial_{u_n}$ of $T_{\lambda}(T^*_qM)$ for any $\lambda \in \pi^{-1}(q)$. For $i = 1, \dots, n$, we define the lift $Y_i$ of $X_i$ as the (local) vector field on $T^*M$ such that $\pi_* Y_i = X_i$ and $du_j (Y_i) = 0 \ \ \forall 1 \leq j \leq n$. In this way the local frame $\{ X_1, \dots, X_n\}$ on $M$ induces  the local frame
\begin{equation}	
\label{frame}
	\{ Y_1, \dots, Y_n, \partial_{u_1}, \dots, \partial_{u_n}\}
\end{equation}
on  $T^*M$. By a standard calculation, we obtain $h_1 = \frac{1}{2}\sum_{i=1}^{m} u_i^2$ and
	\begin{equation}
	\label{eq:vech}
	 \qquad \vec{h}_1=\sum_{i=1}^{m} u_i Y_i + \sum_{i=1}^{m} \sum_{j,k=1}^{n} c^{k}_{ij} u_i u_k \partial_{u_j},
	\end{equation}
where $c^k_{ij}$, $i,j,k \in \{1,\dots,n\}$ are the  \emph{structure functions} of the frame  $\{ X_1, \dots, X_n \}$, defined near $q_0$ by
$$
[X_i,X_j] = \sum_{k=1}^n c^k_{ij} X_k.
$$

Further, from \eqref{hh} it follows that $\vec{h}_2=\cfrac{1}{\alpha^2} \vec {h}_1+h_1\overrightarrow{\left(\frac{1}{\alpha^2}\right)}$ and so
\begin{equation}
\label{eq:h2}
\vec{h}_2=\frac{1}{\alpha^2} \Bigl(\sum_{i=1}^{m} u_iY_i + \sum_{i=1}^{m} \sum_{j,k=1}^{n} c^{k}_{ij} u_i u_k \partial_{u_j}\Bigr) - \sum_{j=1}^{n} \frac{1}{\alpha} X_j(\frac{1}{\alpha}) \Bigl(\sum_{i=1}^{m}  u_i^2\Bigr) \partial_{u_j}.
\end{equation}

Finally, let us denote by $\Phi_i(\lambda) , \ i= 1, \dots, n,$ the $u_i$-component  of $\Phi(\lambda)$ on the fiber, i.e.\ $\Phi_i(\lambda)= u_i\circ \Phi(\lambda)$.
From \eqref{hh} it follows that  \begin{equation}\label{firstm}
\Phi_k = \alpha u_k \qquad \hbox{for } k = 1, \dots, m.
\end{equation}


Substituting this into \eqref{orbeqc} we get the following:\footnote {From now on to simplify the notation in all relation involving functions on open sets of $T^*M$ $\alpha$ actually will mean $\alpha\circ \pi$.}

\begin{Lemma}
The map $\Phi$ is an orbital diffeomorphism between extremal flows of sub-Riemannian metrics $g$ and $\alpha^2 g$ if and only if the components $\Phi_{m+1},\ldots, \Phi_n$ satisfy the following system of equations:   	
\begin{align}
& \sum_{k= m+1}^{n} q_{jk} ( \Phi_{k} - \alpha u_k) = \sum_{i=1}^m (X_i(\alpha) u_j - X_j(\alpha) u_i)u_i,\quad j=1,\ldots, m
\label{eq:3.6}\\
& \vec{h}_1(\Phi_k - \alpha u_k) = \sum_{l= m+1}^{n} q_{kl} ( \Phi_{l} - \alpha u_l) + \label{eq:3.7} \\
& \ \qquad \qquad   \qquad \qquad + \sum_{i=1}^m (X_k(\alpha) u_i - X_i(\alpha) u_k) u_i, \quad k = m+1, \dots , n,
\end{align}
where $q_{jk} = \sum_{i=1}^m c_{ij}^k u_i$.
\end{Lemma}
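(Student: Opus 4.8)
The plan is to substitute the already-established relations \eqref{firstm} into the orbital identity \eqref{orbeqc} and read off its components in the moving frame \eqref{frame}. I view \eqref{orbeqc} as an equality of tangent vectors at the point $\Phi(\lambda)$ and decompose both sides against the coframe dual to \eqref{frame}: the forms $du_1,\dots,du_n$ extract the vertical ($\partial_{u_j}$) components, while the pull-backs $\pi^*\omega^i$ of the coframe $\{\omega^i\}$ dual to $\{X_i\}$ extract the horizontal ($Y_i$) components. Since $\Phi$ is fiber-preserving, $\pi_*\bigl(d\Phi\,\vec{h}_1\bigr)=\pi_*\vec{h}_1=\sum_{i=1}^m u_iX_i$, so the horizontal components of the left-hand side are $u_i$; comparing with the horizontal components of $\alpha\vec{h}_2(\Phi(\lambda))$, computed from \eqref{eq:h2}, reproduces exactly \eqref{firstm}, which is why only the vertical components carry new information.

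For the vertical part, I first compute the left-hand side. Because $\Phi$ is fiber-preserving, $u_j\circ\Phi=\Phi_j$, hence $du_j\bigl(d\Phi\,\vec{h}_1\bigr)=\vec{h}_1(u_j\circ\Phi)=\vec{h}_1(\Phi_j)$. For the right-hand side I evaluate \eqref{eq:h2} at $\Phi(\lambda)$, keeping in mind that the base-dependent coefficients $\alpha$, $c_{ij}^{k}$ and $X_j(1/\alpha)$ are unchanged under the fiber-preserving map, while each fiber coordinate $u_i$ is replaced by $\Phi_i$. Using $\Phi_i=\alpha u_i$ from \eqref{firstm} in the sums running over $i\le m$ (but keeping $\Phi_k$ for $k>m$ as the genuine unknowns), together with the identity $\alpha^2 X_j(1/\alpha)=-X_j(\alpha)$, the $j$-th vertical component of $\alpha\vec{h}_2(\Phi(\lambda))$ collapses to $\sum_{k=1}^n q_{jk}\Phi_k+X_j(\alpha)\sum_{i=1}^m u_i^2$, where $q_{jk}=\sum_{i=1}^m c_{ij}^{k}u_i$. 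Thus the vertical part of \eqref{orbeqc} is equivalent to
\[
\vec{h}_1(\Phi_j)=\sum_{k=1}^n q_{jk}\Phi_k+X_j(\alpha)\sum_{i=1}^m u_i^2,\qquad j=1,\dots,n.
\]

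It remains to split this single family into the two asserted ones. For $j\le m$ I substitute $\Phi_j=\alpha u_j$ on the left and expand, using $\vec{h}_1(\alpha)=\sum_{i=1}^m u_iX_i(\alpha)$ and $\vec{h}_1(u_j)=\sum_{k=1}^n q_{jk}u_k$ read off from \eqref{eq:vech}; the differential term $\alpha\sum_k q_{jk}u_k$ then cancels the $k\le m$ part of $\sum_k q_{jk}\Phi_k$ (again by \eqref{firstm}), and the remainder rearranges into the purely algebraic relations \eqref{eq:3.6}. For $j=m+1,\dots,n$ I instead write $\vec{h}_1(\Phi_j)=\vec{h}_1(\Phi_j-\alpha u_j)+\vec{h}_1(\alpha u_j)$ and perform the same expansion, the surviving $k>m$ terms producing $\sum_{l=m+1}^n q_{kl}(\Phi_l-\alpha u_l)$ and yielding \eqref{eq:3.7}. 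Since every step is a reversible equality, this establishes the stated equivalence. The only real difficulty is bookkeeping: one must scrupulously track which summation indices may invoke \eqref{firstm} (precisely those ranging over $1,\dots,m$) and which must retain the unknown components $\Phi_{m+1},\dots,\Phi_n$, and keep the derivative simplification $X_j(1/\alpha)=-\alpha^{-2}X_j(\alpha)$ straight; no conceptual obstacle arises.
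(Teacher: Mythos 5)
Your proof is correct and follows exactly the paper's route: the paper's own justification is precisely the remark that \eqref{eq:3.6}--\eqref{eq:3.7} result from plugging \eqref{eq:vech}, \eqref{eq:h2}, and \eqref{firstm} into \eqref{orbeqc} and comparing the $\partial_{u_j}$-components (for $j\le m$ and $j>m$ respectively), which is the computation you carry out in detail, with the signs, the identity $X_j(1/\alpha)=-\alpha^{-2}X_j(\alpha)$, and the cancellation via \eqref{firstm} all handled correctly. Your additional check that the horizontal ($Y_i$) components merely reproduce \eqref{firstm} is a welcome piece of bookkeeping the paper leaves implicit, but it does not change the approach.
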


Equations \eqref{eq:3.6}-\eqref{eq:3.7} are obtained by straightforward calculations in the moving frame \eqref{frame} after plugging equations \eqref{eq:vech}, \eqref{eq:h2}, and \eqref{firstm} into \eqref{orbeqc}. Equations \eqref{eq:3.6} are obtained by comparison of the components of $\partial_{u_j}$ of both sides of \eqref{orbeqc} with $j=1,\ldots, m$, while equations \eqref{eq:3.7} are obtained by comparison of the components of $\partial_{u_k}$ of both sides of \eqref{orbeqc} with $k=1,\ldots, m+1$.

\subsection{Fundamental algebraic system}  Now following  \cite{jmz2018} again we replace the system \eqref{eq:3.6}-\eqref{eq:3.7} that contains derivatives of the unknown functions $\Phi_k$, $k=m+1, \ldots, n$ by the (infinite) linear algebraic i.e. without derivatives) system for that unknown function that we call the \emph{fundamental algebraic system}. The process of obtaining the latter can be seen in a sense as the infinite prolongation of the subsystem given by \eqref{eq:3.6} using in each step of the prolongation  the equations from  \eqref{eq:3.7}.

In more details, in the first step one differentiate each of $m$ equations from \eqref{eq:3.6} in the direction of $h_1$  and replace each  $\vec h_1(\Phi_k-\alpha u_k)$ in the resulting  expression by
the right-hand side of \eqref{eq:3.7}. In this way we get new $m$ equations which are linear in $\Phi_k-\alpha u_k$. In the next step we differentiate these new $m$ equations  in the direction of $h_1$ and replace each  $\vec h_1(\Phi_k-\alpha u_k)$ in the resulting  expression by
the right-hand side of \eqref{eq:3.7} to obtain new $m$ equations which are linear in $\Phi_k-\alpha u_k$.	The fundamental algebraic system is obtained by repeating  this process infinitely many times. Setting $\widetilde{u}=(u_{m+1}, \dots, u_n)$ and $\widetilde{\Phi}=(\Phi_{m+1}, \dots, \Phi_n)$, the fundamental algebraic system \cite[(3.8)]{jmz2018} writes as
	 \begin{equation}
	 \label{eq:3.8}
	A (\widetilde{\Phi} - \alpha \widetilde{u}) = d,
	 \end{equation}
where $A$ is the matrix defined recursively in \cite[(3.10)]{jmz2018} and $d$ is a column vector with an infinite number of rows which can be decomposed in layers of $m$ rows as
\begin{equation}
\label{eq:3.9}
		d= \left(
		 \begin{array}{c}
		d^1 \\
		d^2 \\
		 \vdots \\
		d^{s} \\
		 \vdots \\
		 \end{array}
		 \right),
\end{equation}
where the coefficients  $d^{s}_j$, $1 \leq j \leq m$, of the vector $d^s \in \R^m$ are defined by
\begin{equation}
		 \label{eq:3.11}
		 \left\{
\begin{aligned}
		& d^{1}_{j} = \sum_{i=1}^m (X_i(\alpha) u_j - X_j(\alpha) u_i)u_i, \\
		& d^{s+1}_{j} = \ \vec{h}_1(d^s_{j}) + \sum_{k = m+1}^{n} a^s_{j,k} \sum_{i=1}^{m} u_i \left( X_i(\alpha) u_k - X_k(\alpha) u_i \right).  \\
		 \end{aligned}
		 \right.
\end{equation}
Note that by \cite[Proposition 3.11]{jmz2018} the matrix $A(u)$ is injective at a generic $u$.

\subsection{Sufficient conditions for Weyl rigidity in terms of solutions of the fundamental algebraic system}
The fundamental algebraic system \eqref{eq:3.8} implies that the coordinates of $\Phi$ are rational functions on the fibers. Proving that $g$ and $\alpha^2 g$ are proportional actually amounts to prove that these coordinates are polynomial, as stated below.

\begin{Prop}
\label{th:poly}
If there exists a local orbital diffeomorphism $\Phi$ which is polynomial on the fibers, then $g$ and $\alpha^2 g$ are locally constantly proportional, i.e., $\alpha$ is constant.
\end{Prop}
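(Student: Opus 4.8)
The plan is to prove that polynomiality of $\Phi$ on the fibers forces $\Phi$ to be fiberwise \emph{linear}, and then that in the conformal situation a fiberwise-linear orbital diffeomorphism can only be the constant rescaling $\widetilde\Phi=\alpha\widetilde u$ with $\alpha$ constant. First I would set $\psi_k:=\Phi_k-\alpha u_k$ for $k=m+1,\dots,n$. Since $\alpha u_k$ is fiberwise linear and, by hypothesis, each $\Phi_k$ is a polynomial in $u=(u_1,\dots,u_n)$, each $\psi_k$ is polynomial on the fibers; moreover, by the Lemma above, the vector $\widetilde\Phi-\alpha\widetilde u=(\psi_{m+1},\dots,\psi_n)$ solves \eqref{eq:3.6}--\eqref{eq:3.7}, equivalently the fundamental algebraic system \eqref{eq:3.8}.

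The heart of the argument is that \eqref{eq:3.6}--\eqref{eq:3.7} are \emph{graded} by the fiber-degree: the functions $q_{jk}=\sum_{i=1}^m c_{ij}^k u_i$ are homogeneous of degree $1$, the operator $\vec h_1$ raises the fiber-degree by exactly $1$ (both its horizontal part $\sum u_iY_i$ and its vertical part $\sum c_{ij}^k u_iu_k\partial_{u_j}$ do so), and both right-hand sides are homogeneous of degree $2$. Writing $\psi_k=\sum_{d\ge0}\psi_k^{(d)}$ in fiber-homogeneous components, I would extract, for each $d\neq1$, the degree-$(d+1)$ part of \eqref{eq:3.6}--\eqref{eq:3.7}; there the degree-$2$ sources do not contribute, so $\psi^{(d)}:=(\psi_{m+1}^{(d)},\dots,\psi_n^{(d)})$ solves the \emph{source-free} system $\sum_k q_{jk}\psi_k^{(d)}=0$ and $\vec h_1(\psi_k^{(d)})=\sum_l q_{kl}\psi_l^{(d)}$. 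Prolonging this homogeneous system by the very procedure that produced \eqref{eq:3.8} yields $A\,\psi^{(d)}=0$ with the \emph{same} matrix $A$ (which depends only on the structure functions, not on $\alpha$). Since $A(u)$ is injective at a generic $u$, one gets $\psi^{(d)}\equiv0$ for every $d\neq1$. Hence $\psi=\psi^{(1)}$ is fiberwise linear, i.e.\ $\psi_k=\sum_{l=1}^n b_k^l\,u_l$ for some functions $b_k^l$ on $M$.

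It then remains to feed this linear form back into \eqref{eq:3.6} and its first prolongation \eqref{eq:3.7}, both of which become identities between fiber-quadratic forms. Equation \eqref{eq:3.6} expresses the coefficients $b_k^l$ in terms of the derivatives $X_i(\alpha)$, $i\le m$. Comparing, in the first prolongation \eqref{eq:3.7}, the coefficients of the mixed monomials $u_iu_k$ with $i\le m<k$ — for which the source term $-u_k\sum_{i}X_i(\alpha)u_i$ supplies the decisive contribution — over-determines the system and forces $X_i(\alpha)=0$ for $i=1,\dots,m$; that is, $d\alpha$ annihilates $D$. Because $D$ is bracket generating and $X_i(\alpha)=0$ on $D$ makes all iterated brackets of $X_1,\dots,X_m$ annihilate $\alpha$ as well, $d\alpha$ annihilates all of $TM$, so $d\alpha\equiv0$ and $\alpha$ is locally constant, as claimed.

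I expect the last step to be the main obstacle. The degree reduction to a fiberwise-linear $\Phi$ is a clean consequence of the grading together with the injectivity of $A$, but deducing $X_i(\alpha)=0$ from the quadratic identities requires careful bookkeeping of the structure functions; the antisymmetry $c_{ij}^k=-c_{ji}^k$ and, once more, the injectivity of $A$ are what rule out a spurious nonzero fiberwise-linear solution for non-constant $\alpha$. The Heisenberg case already exhibits the mechanism: there \eqref{eq:3.6} gives $\psi=-X_2(\alpha)u_1+X_1(\alpha)u_2$, and the coefficients of $u_1u_3$ and $u_2u_3$ in \eqref{eq:3.7} then read $X_1(\alpha)=-X_1(\alpha)$ and $X_2(\alpha)=-X_2(\alpha)$, forcing $X_1(\alpha)=X_2(\alpha)=0$.
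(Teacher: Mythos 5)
Your first step is sound, and it is in fact a genuinely different route from the paper's: decomposing $\psi=\widetilde\Phi-\alpha\widetilde u$ into fiber-homogeneous components, noting that $\vec h_1$ raises the fiber degree by exactly one while both sources are quadratic, and concluding that for $d\neq 1$ the component $\psi^{(d)}$ solves the source-free prolonged system $A\,\psi^{(d)}=0$, hence vanishes by the generic injectivity of $A$. This correctly reduces to a fiberwise-linear $\psi$ directly on $(M,D,g)$. The paper obtains linearity differently: it passes to the nilpotent approximation at a regular point $q_0$ (Lemma~\ref{le:APsi=d}), where the unique solution $\Psi$ of $\hat A\Psi=\hat d$ is simultaneously homogeneous of degree $1$ in $u$ and $w$-homogeneous of weighted degree $w_k-1$, which forces each $\Psi_k$ to be linear with \emph{constant} coefficients $\eps_{k_sk_{s-1}}$ supported on the coordinates of weight $w_k-1$.

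Your second step, however, contains a genuine gap, and the difference just noted is exactly why. On the original manifold your linear solution has coefficients $b_k^l$ that are unknown \emph{functions} on $M$, so the horizontal part $\sum_i u_iY_i$ of $\vec h_1$ produces derivative terms $X_i(b_k^l)\,u_iu_l$ in \eqref{eq:3.7}; coefficient identification therefore yields a coupled system of first-order PDEs in the $b_k^l$ together with $\alpha$, not a closed algebraic system that could be "over-determined". Moreover neither of the two mechanisms you invoke can finish the job: the antisymmetry $c_{ij}^k=-c_{ji}^k$ gives no decoupling for general (non-graded) structure functions, and the injectivity of $A$ is useless here because $\psi^{(1)}$ solves the \emph{inhomogeneous} system $A\,\psi^{(1)}=d$, so injectivity only yields uniqueness of the linear solution and cannot rule it out. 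Your Heisenberg computation works precisely because that structure is already graded nilpotent of step $2$, so no derivative terms pollute the $u_iu_3$ monomials. The paper closes this gap by running the whole argument on the nilpotent approximation, where the constancy of the $\eps_{k_sk_{s-1}}$ kills the derivative terms and the structure constants respect the weights: the identities from \eqref{eq:3.7'} then telescope across the layers $s=1,\dots,r$ via $K_i(s-1)=K_i(s)-(n_s-n_{s-1})\alpha^i$ with $K_i(r)=0$ at the nilpotency step, giving $K_i(1)=-(n-m)\alpha^i$ as in \eqref{eq:Ki_rec}, while the first-layer equation \eqref{eq:3.6'} gives $K_i(1)=(m-1)\alpha^i$ as in \eqref{eq:Ki_1}, whence $(n-1)\alpha^i=0$ (Lemma~\ref{prop:poly}). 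Your Heisenberg identity is the $r=2$, $(m,n)=(2,3)$ instance of this global count over all weights; it is this counting across every layer, not local bookkeeping in one equation, that forces $X_i(\alpha)(q_0)=0$. To complete your proof you would either have to pass to the nilpotent approximation to reproduce the telescoping (essentially rejoining the paper's argument), or find a genuinely new mechanism valid for non-graded structure functions, which your sketch does not supply.
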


Before giving the proof of this result, we need to study the consequence of the fundamental algebraic system on the nilpotent approximation.

Fix a regular point $q_0$ and denote by $(\hat{M}, \hat{D})$ the nilpotent approximation of $(M, D)$ at $q_0$. We argue as in the proof of \cite[Theorem 7.1]{jmz2018}, with the same notations. In particular $\{\hat{X}_1, \dots, \hat{X}_n\}$ is a frame of $T \hat M$ adapted to $\hat D$ such that $\hat{X}_1, \dots, \hat{X}_m$ is $\hat{g}$-orthonormal and $\hat A$ is the matrix of \cite[Proposition 3.10]{jmz2018} constructed by using $\{\hat{X}_1, \dots, \hat{X}_n\}$ as a frame.

\begin{Lemma}
\label{le:APsi=d}
There exists one, and only one, solution $\Psi=(\Psi_{m+1}, \dots, \Psi_n)$ s of
\begin{equation}
\label{eq:fundamental_nilp}
\hat{A} \Psi=\hat{d},
\end{equation}
where, for any $s \in \N$ and $1 \leq j \leq m$, $\hat{d}^s_j$ is defined by
\begin{equation}
\label{eq:3.11'}
\left\{
\begin{aligned}
		& \hat{d}^{1}_{j} = \sum_{i=1}^m (X_i(\alpha)(q_0) u_j - X_j(\alpha)(q_0) u_i)u_i, \\
		& \hat{d}^{s+1}_{j} = \ \vec{\hat{h}}_1(\hat{d}^s_{j}) + \sum_{k = m+1}^{n} \hat{a}^s_{j,k} \sum_{i=1}^{m} u_i  X_i(\alpha)(q_0) u_k .  \\
\end{aligned}
\right.
\end{equation}
\end{Lemma}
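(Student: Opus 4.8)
The plan is to establish uniqueness and existence separately: uniqueness will be an immediate consequence of an injectivity property of $\hat A$, while existence will be obtained by passing to the nilpotent limit in the actual fundamental algebraic system \eqref{eq:3.8}, whose solvability is already available in the present setting (recall that $g$ and $\alpha^2 g$ are assumed conformal and projectively equivalent, so by \cite[Proposition 3.4]{jmz2018} a local orbital diffeomorphism $\Phi$ exists near generic points).

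\emph{Uniqueness.} First I would note that the nilpotent approximation $\hat D$ is again bracket generating, so the frame $\{\hat X_1,\dots,\hat X_n\}$ satisfies the hypotheses under which \cite[Proposition 3.11]{jmz2018} was proved; its conclusion then applies verbatim and yields that $\hat A(u)$ is injective for generic $u$ on the fiber. Since $\Psi$ has only the $n-m$ components $\Psi_{m+1},\dots,\Psi_n$, any two solutions $\Psi,\Psi'$ of \eqref{eq:fundamental_nilp} satisfy $\hat A(\Psi-\Psi')=0$; evaluating at a generic $u$ and invoking injectivity of $\hat A(u)$ forces $(\Psi-\Psi')(u)=0$ there, hence $\Psi=\Psi'$ on the dense set of generic points and therefore wherever both are defined.

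\emph{Existence.} Here I would realize $\Psi$ as the leading part, with respect to the weights of the nilpotent approximation at $q_0$, of the solution of the actual system. In the present setting \eqref{eq:3.8} is solved by $\widetilde\Phi-\alpha\widetilde u$, which is rational on the fibers because $A$ is injective at generic $u$. Applying the dilations that define the nilpotent approximation and rescaling each layer by its weight, as the dilation parameter tends to $0$ the structure functions $c^k_{ij}$ converge to their nilpotent counterparts $\hat c^k_{ij}$, so that $\vec h_1\to\vec{\hat{h}}_1$ and, entry by entry, $A\to\hat A$; simultaneously the coefficients $X_i(\alpha)$ are replaced by their frozen values $X_i(\alpha)(q_0)$, so that $d\to\hat d$. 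The crucial bookkeeping is that, in this weighted limit, the terms $X_k(\alpha)u_i$ with $k>m$ occurring in the recursion \eqref{eq:3.11} carry strictly higher dilation weight than the retained terms and drop out, which is exactly why the nilpotent right-hand side \eqref{eq:3.11'} contains only $X_i(\alpha)(q_0)u_k$. The leading part of $\widetilde\Phi-\alpha\widetilde u$ is then a well-defined vector $\Psi$ satisfying $\hat A\Psi=\hat d$.

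I expect this existence step to be the main obstacle. One has to justify that the weighted rescaling of the rational solution $\widetilde\Phi-\alpha\widetilde u$ converges rather than blows up, to identify $\hat A$ and $\hat d$ as the genuine weighted-leading parts of $A$ and $d$ so that no leading-order cancellation destroys the system, and to verify layer by layer that precisely the terms displayed in \eqref{eq:3.11'} survive while the others are subleading. This is the same weight-tracking mechanism as in the proof of \cite[Theorem 7.1]{jmz2018}. Uniqueness, by contrast, is immediate once the injectivity of $\hat A$ has been recorded.
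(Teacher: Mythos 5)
Your uniqueness half is exactly the paper's argument: \cite[Proposition 3.11]{jmz2018}, applied with $\{\hat X_1,\dots,\hat X_n\}$ as frame, gives that $\hat A(u)$ has full rank $n-m$ at generic $u$, which pins down $\Psi$ wherever it exists. Likewise, the weight bookkeeping you describe is the first step of the paper's proof: an easy induction on \eqref{eq:3.11} shows that $\mathrm{deg}_w(d^s_j)\leq 2s$ and that $\hat d^s_j$ is the homogeneous component of top weighted degree of $d^s_j(q_0)$ (the terms $X_k(\alpha)u_k u_i$ with $k>m$ drop out precisely because $w_k+1>2$, as you say).

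Where you diverge is the existence step, and there your sketch has the genuine gap that you yourself flag without closing. Passing to the dilation limit of the rational solution $\widetilde\Phi-\alpha\widetilde u$ requires the rescaled quotients $p_k/\delta$ to converge, and entrywise convergence $A\to\hat A$ does not deliver this: for an arbitrary nonzero maximal minor $\delta$ of $A$, the corresponding minor of $\hat A$ is only the homogeneous component of the \emph{expected} top weighted degree of $\delta(q_0)$ and may vanish by cancellation, in which case the true leading part of $\delta$ has lower degree, the rescaled denominator tends to zero, and the limit can blow up. The paper's proof of Lemma~\ref{le:APsi=d} avoids any limit by an algebraic rank argument that is immune to this cancellation issue: since \eqref{eq:3.8} is solvable, the augmented matrix $\begin{pmatrix} A & d \end{pmatrix}$ is not of full rank, so all its maximal minors vanish identically; because the weighted-degree bounds on the entries are additive in rows and columns, each maximal minor of $\begin{pmatrix} \hat A & \hat d \end{pmatrix}$ is a fixed weighted-homogeneous component of the corresponding minor of $\begin{pmatrix} A & d \end{pmatrix}(q_0)$, and every homogeneous component of the zero polynomial is zero, so $\begin{pmatrix} \hat A & \hat d \end{pmatrix}$ is not of full rank either. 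Generic full rank of $\hat A$ then forces the kernel at generic $u$ to be one-dimensional with nonzero last coordinate, so it contains a unique element of the form $(\Psi,-1)$ — existence and uniqueness in one stroke. Your limit picture is in fact vindicated a posteriori in the paper's proof of Proposition~\ref{th:poly}, where $\Psi_k=\hat p_k/\hat\delta$; but note that there the paper first picks a \emph{nonzero} minor $\hat\delta$ of $\hat A$ (available by \cite[Proposition 3.11]{jmz2018}) and only then the corresponding minor $\delta$ of $A$, guaranteeing a non-cancelling leading part — exactly the care your sketch would need to supply to be complete.
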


\begin{proof}
An easy induction argument based on equations \eqref{eq:3.11} shows the following result, similar to \cite[Lemma 7.4]{jmz2018}: for any $s \in \N$ and $1 \leq j \leq m$, there hold:
\begin{itemize}
\item for every $q \in M$ near $q_0$,  $d^s_j$ is a polynomial in $u_1,\dots,u_n$ of weighted degree
$$
\mathrm{deg}_w (d^s_j) \leq 2s ;
$$

\item the homogeneous term of highest weighted degree in $d^s_j(q_0)$ is $\hat{d}^s_j$.
\end{itemize}
It results from \eqref{eq:3.8} that  $\begin{pmatrix} A & d \end{pmatrix}$ is not of full rank, thus also the matrix $\begin{pmatrix} \hat{A} & \hat{d} \end{pmatrix}$ is not of full rank. Since $\hat A$ is of full rank at a generic $u$  by \cite[Proposition 3.11]{jmz2018}, there exists a unique element in $\ker \begin{pmatrix} \hat{A} & \hat{d} \end{pmatrix}$ of the form
$(\Psi, -1)$, which ends the proof.
\end{proof}

Using all equations above it is easy to show that $\Psi$ has the following properties.
\begin{enumerate}[(i)]
  \item \label{list:psi_k} Each $\Psi_k$, $k=m+1, \dots, n$, is a rational function which is:
  \begin{itemize}
    \item homogeneous of degree $1$ w.r.t.\ the usual degree;

    \item $w$-homogeneous with $\mathrm{deg}_w(\Psi_k)=w_k-1$.
  \end{itemize}

\item For $j=1, \dots, m$, we have
\begin{equation}
\label{eq:3.6'}
\sum_{\{k \, : \, w_k=2\}} \sum_{i=1}^m \hat{c}_{ij}^k u_i \Psi_{k}  = \sum_{i=1}^m (\alpha^i u_j - \alpha^j u_i)u_i,
\end{equation}
where $\alpha^j=X_i(\alpha)(q_0)$.

\item For $k=m+1, \dots, n$, we have
  \begin{equation}
		 \label{eq:3.7'}
\vec{\hat{h}} (\Psi_k )= \sum_{\{l \, : \, w_l=w_k+1\}} \sum_{i=1}^m \hat{c}_{ik}^l u_i \Psi_{l} - \sum_{i=1}^m \alpha^i u_k u_i.
\end{equation}

\end{enumerate}

\begin{Lemma}
\label{prop:poly}
Assume that the map $\Psi$ given in Lemma~\ref{le:APsi=d} is polynomial. Then
\begin{equation}
X_1(\alpha)(q_0) = \cdots = X_m(\alpha)(q_0) = 0.
\end{equation}
\end{Lemma}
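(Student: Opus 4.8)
The plan is to exploit that the polynomiality assumption, combined with the homogeneity recorded in property~(i), rigidifies the unknowns $\Psi_k$ into linear forms, and then to feed these linear forms through the graded recursion \eqref{eq:3.6'}--\eqref{eq:3.7'} until the source terms carrying $\alpha^i=X_i(\alpha)(q_0)$ become overdetermined. First I would observe that each $\Psi_k$, being rational and homogeneous of degree $1$ with respect to the usual degree by property~(i), is forced by polynomiality to be an honest linear form $\Psi_k=\sum_i b_k^i u_i$ with \emph{constant} coefficients $b_k^i$; the $w$-homogeneity $\mathrm{deg}_w(\Psi_k)=w_k-1$ then restricts the sum to indices $i$ with $w_i=w_k-1$. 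In particular, for $w_k=2$ the form $\Psi_k=\sum_{i=1}^m b_k^i u_i$ involves the weight-one coordinates only.

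Substituting these linear forms into \eqref{eq:3.6'} turns each of the $m$ equations into an identity between quadratic forms in $u_1,\dots,u_m$: on the left a form built from the structure constants $\hat c_{ij}^k$ (skew-symmetric in $i,j$ for $w_k=2$) and the coefficients $b_k^i$, on the right the explicit form $\sum_{i=1}^m(\alpha^i u_j-\alpha^j u_i)u_i$. Matching coefficients expresses the weight-two coefficients $b_k^i$ in terms of the numbers $\alpha^i$. At this stage $\alpha$ is not yet constrained: because of the skew-symmetry of $\hat c_{ij}^k$ the totally symmetric part (over the three indices $j,a,b$) of both sides vanishes identically, so \eqref{eq:3.6'} by itself admits solutions with $\alpha\neq0$. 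This is precisely why the recursion \eqref{eq:3.7'} is indispensable.

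The decisive input is \eqref{eq:3.7'}, viewed as a graded recursion that, at weight $w_k$, ties $\vec{\hat h}(\Psi_k)$ to the next layer $\{\Psi_l:w_l=w_k+1\}$ and to the source $-\sum_{i=1}^m\alpha^i u_i u_k$. I would propagate the expressions obtained above upward through \eqref{eq:3.7'}, layer by layer, until reaching the top weight $r$, where the next-layer sum is empty and \eqref{eq:3.7'} reduces to $\vec{\hat h}(\Psi_k)=-\sum_{i=1}^m\alpha^i u_i u_k$. Since $\vec{\hat h}(u_i)=\sum_{a=1}^m\sum_{c} \hat c_{ai}^c u_a u_c$ is again governed by the same skew-symmetric structure constants, substituting the coefficients already determined in the lower steps produces, on the left, a source of the form $+\sum_{i=1}^m\alpha^i u_i u_k$ (as the step-two, i.e.\ contact, case makes completely transparent), which must equal $-\sum_{i=1}^m\alpha^i u_i u_k$. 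Hence $2\sum_{i=1}^m\alpha^i u_i u_k\equiv 0$ on the relevant quadratic monomials, forcing $\alpha^1=\cdots=\alpha^m=0$.

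The part I expect to be delicate is this last step: in step greater than two the top-weight equations are reached only after several substitutions, and one must verify that the contributions of the intermediate layers do not cancel the sign-definite source term. This amounts to controlling the image of the iterated bracket maps of the nilpotent approximation, so that the clean sign computation valid in the contact case has to be replaced by a careful induction on the weight, using at each step the skew-symmetry of the $\hat c_{ij}^k$ together with the grading to keep the $\alpha$-sources from collapsing.
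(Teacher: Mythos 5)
Your setup coincides with the paper's: polynomiality together with the bi-homogeneity of property (i) forces each $\Psi_{k_s}$ to be a linear form $\Psi_{k_s}=\sum_{k_{s-1}}\eps_{k_s k_{s-1}}u_{k_{s-1}}$ with constant coefficients supported on the layer of weight $w_{k_s}-1$, and the proof is indeed run by feeding these forms through \eqref{eq:3.6'} and \eqref{eq:3.7'}. But the step you flag as delicate is the entire content of the lemma, and the mechanism you propose for it fails. Your coefficient-wise upward propagation presupposes that \eqref{eq:3.6'} determines the weight-two coefficients individually in terms of the $\alpha^i$, and that each layer then pins down the next; in fact, for step greater than two these identities only constrain certain contractions of the $\eps$'s against the structure constants (the unknowns per layer generally outnumber the identities), so there are no "coefficients already determined" to substitute. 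Consequently your claim that the top-weight left-hand side collapses to $+\sum_{i}\alpha^i u_i u_k$ is unsubstantiated: it is a correct computation in the contact/Heisenberg case, where the single weight-two layer is determined by \eqref{eq:3.6'} and the sign clash $+\sum_i\alpha^iu_iu_k=-\sum_i\alpha^iu_iu_k$ does appear, but nothing in your argument controls the intermediate layers in higher step, and your closing appeal to "controlling the image of the iterated bracket maps" restates the difficulty rather than resolving it.

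The paper's proof needs no control of individual coefficients: it works with a single scalar trace per index $i$. Identifying the coefficient of $u_iu_{k_s}$ in \eqref{eq:3.7'} and then summing over the $n_s-n_{s-1}$ indices $k_s$ of the layer, one gets, with $K_i(s)=\sum_{k_s,k_{s+1}}\hat{c}_{ik_s}^{k_{s+1}}\eps_{k_{s+1}k_s}$, the telescoping recursion $K_i(s-1)=K_i(s)-(n_s-n_{s-1})\alpha^i$, anchored at the top by $K_i(r)=0$ (there is no layer of weight $r+1$), whence $K_i(1)=-(n-m)\alpha^i$. A second, independent evaluation of the same trace from the bottom equation \eqref{eq:3.6'} (identify the coefficient of $u_iu_j$ and sum over $j=k_1$) gives $K_i(1)=(m-1)\alpha^i$. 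Comparing the two yields $(n-1)\alpha^i=0$, hence $\alpha^i=0$. All the coefficients your induction would have had to track cancel upon summation; if you want to salvage your plan, the repair is precisely to replace "propagate the coefficients" by "propagate this trace".
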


\begin{proof}
By hypothesis, every $\Psi_k$, $k=m+1, \dots, n$, is a polynomial. Moreover, by Property~\ref{list:psi_k} above, $\Psi_k$ is a linear function of $u$ and depends only on the coordinates $u_l$ of weight $w_l=w_k-1$. To simplify the notations, we use the following convention: given a positive integer $s$, an index $k_s$ denotes an index of weight $w_{k_s}=s$ and $\sum_{k_s}$ denotes  $\sum_{\{k_s \, : \, w_{k_s}=s\}}$. With this notation we have, for every $k_s$,
\begin{equation}
\label{eq:psi_eps}
\Psi_{k_s} = \sum_{k_{s-1}} \eps_{k_s k_{s-1}} u_{k_{s-1}},
\end{equation}
where the coefficients $\eps_{k_s k_{s-1}}$ are real numbers. Taking the derivative along $\vec{\hat{h}}$ we obtain
\begin{equation}
\label{eq:hpsi1}
\vec{\hat{h}}(\Psi_{k_s}) = \sum_{i=1}^m \sum_{k_{s-1}, l_s} \eps_{k_s k_{s-1}} \hat{c}_{ik_{s-1}}^{l_s} u_i u_{l_s}.
\end{equation}
On the other hand, plugging \eqref{eq:psi_eps} into \eqref{eq:3.7'}, we get
\begin{equation}
\label{eq:hpsi2}
\vec{\hat{h}}(\Psi_{k_s}) = \sum_{i=1}^m \sum_{l_s, l_{s+1}} \hat{c}_{ik_{s}}^{l_{s+1}}\eps_{l_{s+1} l_{s}} u_i u_{l_s} - \sum_{i=1}^m \alpha^i u_k u_i.
\end{equation}
Fix an index $i \in \{1,\dots,m\}$. By identifying the coefficients of the monomial $u_i u_{k_s}$ in \eqref{eq:hpsi1} and \eqref{eq:hpsi2}, we obtain the following equality,
\begin{equation}
\label{eq:ks}
\sum_{k_{s-1}} \eps_{k_s k_{s-1}} \hat{c}_{ik_{s-1}}^{k_s} =  \sum_{k_{s+1}} \hat{c}_{ik_{s}}^{k_{s+1}}\eps_{k_{s+1} k_{s}}  - \alpha^i,
\end{equation}
and, after a summation on the $n_s-n_{s-1}$ indices $k_s$,
\begin{equation}
\label{eq:Ki}
\sum_{k_{s-1}, k_s} \hat{c}_{ik_{s-1}}^{k_s} \eps_{k_s k_{s-1}} =  \sum_{k_s,k_{s+1}} \hat{c}_{ik_{s}}^{k_{s+1}}\eps_{k_{s+1} k_{s}}  - (n_s-n_{s-1}) \alpha^i.
\end{equation}

Set $K_i(s) = \sum_{k_s,k_{s+1}} \hat{c}_{ik_{s}}^{k_{s+1}}\eps_{k_{s+1} k_{s}}$. Then the above equality writes as
\begin{equation}
\label{eq:Ki_recs}
K_i(s-1) =  K_i(s)  - (n_s-n_{s-1}) \alpha^i \qquad \hbox{for } s > 1.
\end{equation}
Note that $K_i(r)=0$ since $r$ is the nilpotency step. Hence,
\begin{equation}
\label{eq:Ki_rec}
K_i(1) =    - (n_r-n_{1}) \alpha^i = - (n-m) \alpha^i.
\end{equation}
Now, by plugging \eqref{eq:psi_eps} in \eqref{eq:3.6'}, we have, for $j=1,\dots,m$:
\begin{equation}
\label{eq:3.6_eps}
\sum_{k_2,k_1} \sum_{i=1}^m \hat{c}_{ij}^{k_2} \eps_{k_2 k_{1}} u_i u_{k_1}  = \sum_{i=1}^m (\alpha^i u_j - \alpha^j u_i)u_i.
\end{equation}
Given an index $i \in \{1,\dots,m\}$, the identification of coefficient of $u_i u_j$ in this equality gives
\begin{equation}
\label{eq:3.6_ij}
\sum_{k_2} \hat{c}_{ij}^{k_2} \eps_{k_2 j}   = (1 - \delta_{ij}) \alpha^i,
\end{equation}
and by summation on the indices $j=k_1$, we obtain
\begin{equation}
\label{eq:Ki_1}
K_i(1)  = (m-1) \alpha^i.
\end{equation}
This equation and \eqref{eq:Ki_rec} imply $\alpha^i=0$, which ends the proof.
\end{proof}


\begin{proof}[Proof of Proposition~\ref{th:poly}.]
Assume $\Phi$ to be defined on an open subset $U$ of $T^*M$. Fix a regular point $q_0$ in $\pi(U)$ and let $(\hat{M}, \hat{D})$ be the nilpotent approximation of $(M, D)$ at $q_0$.

Let $\hat \delta$ be a nonzero maximal minor of $\hat A$. It is a $w$-homogeneous polynomial which is the homogeneous part of highest weighted degree of the corresponding minor (same rows and columns) $\delta$ of $A$, which is nonzero as well. It results easily from \eqref{eq:3.8} that, for $k=m+1,\dots, n$, we have  $\Phi_k - \alpha u_k=p_k/\delta$ where $d_w(p_k) \leq d_w(\delta)+2$, and  $\Psi_k= \hat{p}_k/\hat{\delta}$, where $\hat p_k$ is the homogeneous part (eventually zero) of weighted degree $d_w(\delta)+2$ in $p_k$. From the hypothesis of the theorem, $p_k/\delta$ is polynomial, therefore $\Psi_k= \hat{p}_k/\hat{\delta}$ is polynomial as well and by Lemma~\ref{prop:poly} we get $X_i(\alpha)(q_0)=0$, $i=1,\dots,m$.

Since regular points form an open and dense subset of $\pi(U)$, the functions $X_i(\alpha)$, $i=1,\dots,m$, are identically zero on $\pi(U)$. The family $X_1,\dots, X_m$ being a Lie-bracket generating family, we thus obtain that $\alpha$ is locally constant.
\end{proof}

\subsection{A remark on Lemma~\ref{le:APsi=d}}

Let $\hat{\alpha}$ be the real-valued function on $\hat M$ defined by
\begin{equation}
\left\{
\begin{aligned}
		& \hat{\alpha} (0)= \alpha(q_0),\\
		& \hat{X_i}(\hat{\alpha}) \equiv X_i(\alpha)(q_0) \qquad i= 1, \dots, m, \\
    & \hat{X_k}(\hat{\alpha}) \equiv 0,  \qquad k= m+1, \dots, n.
\end{aligned}
\right.
\end{equation}
In a system of privileged coordinates $z$ at $q_0$ such that $X_i(z_j)(q_0)=\delta_{ij}$, $\hat{\alpha}$ writes as
\begin{equation}
\hat{\alpha} = \alpha(q_0) + \sum_{i=1}^m z_i X_i(\alpha)(q_0).
\end{equation}
The existence of the mapping $\Psi$ in Lemma~\ref{le:APsi=d} may be interpreted as follows.
\begin{Lemma}
There exists a fiber-preserving map $\hat{\Phi}: T^* \hat{M} \to T^* \hat{M}$ such that, on a neighbourhood of every ample covector (w.r.t.\ $\hat{g}$), $\hat{\Phi}$ is smooth and sends the integral curves of the Hamiltonian vector fields of the metric $\hat{g}$ to the ones of $\hat{\alpha}^2\hat{g}$.
\end{Lemma}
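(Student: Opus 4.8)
The plan is to exhibit $\hat\Phi$ explicitly through its fiber coordinates, reading off its non-trivial components directly from the solution $\Psi$ of Lemma~\ref{le:APsi=d}. In the coordinates $(u_1,\dots,u_n)$ on the fibers of $T^*\hat M$ induced by $\{\hat X_1,\dots,\hat X_n\}$ I would set
\begin{equation*}
\hat{\Phi}_k = \hat{\alpha}\, u_k \quad (1 \le k \le m), \qquad \hat{\Phi}_k = \Psi_k + \hat{\alpha}\, u_k \quad (m+1 \le k \le n),
\end{equation*}
with $\hat\alpha$ the function introduced above (constant along the fibers). By construction $\hat\Phi$ is fiber-preserving, and since each $\Psi_k$ is homogeneous of degree $1$ in $u$ by Property~\ref{list:psi_k}, every component $\hat\Phi_k$ is homogeneous of degree $1$ as well; thus $\hat\Phi$ already has the rescaled form of an extended orbital diffeomorphism and is well defined on the whole fiber wherever its rational components are.

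To show that $\hat\Phi$ sends integral curves of $\vec{\hat h}_1$ to reparameterized integral curves of $\vec{\hat h}_2$, I would first transfer to the nilpotent structure $(\hat M,\hat D,\hat g)$ the equivalence underlying the Lemma that produced \eqref{eq:3.6}--\eqref{eq:3.7}: because $(\hat M,\hat D,\hat g)$ is a genuine sub-Riemannian structure with adapted frame $\{\hat X_1,\dots,\hat X_n\}$ and $\hat\alpha$ satisfies $\hat h_2=\hat\alpha^{-2}\hat h_1$, the same moving-frame computation shows that $\hat\Phi$ obeys the orbital equation \eqref{orbeqc} (with $\alpha,\vec h_i,X_i$ replaced by $\hat\alpha,\vec{\hat h}_i,\hat X_i$) if and only if its components satisfy the nilpotent analogs of \eqref{eq:3.6} and \eqref{eq:3.7}; here the prescription $\hat\Phi_k=\hat\alpha u_k$ for $k\le m$ is forced exactly as \eqref{firstm} is forced by \eqref{hh}. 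It then remains to recognize that these nilpotent analogs are precisely the relations \eqref{eq:3.6'}--\eqref{eq:3.7'} already satisfied by $\Psi$. Two features of the nilpotent data make them match: since $\hat c_{ij}^k$ is $w$-homogeneous, the terms $\sum_{i,k}\hat c_{ij}^k u_i\Psi_k$ only involve indices with $w_k=2$ in \eqref{eq:3.6} and $\sum_{i,l}\hat c_{ik}^l u_i\Psi_l$ only indices with $w_l=w_k+1$ in \eqref{eq:3.7}, reproducing the weighted sums of \eqref{eq:3.6'}--\eqref{eq:3.7'}; and since $\hat X_i(\hat\alpha)=\alpha^i$ for $i\le m$ while $\hat X_k(\hat\alpha)=0$ for $k>m$, the inhomogeneous terms collapse to $\sum_i(\alpha^i u_j-\alpha^j u_i)u_i$ and $-\sum_i\alpha^i u_k u_i$, exactly the right-hand sides of \eqref{eq:3.6'} and \eqref{eq:3.7'}. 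As $\Psi$ satisfies \eqref{eq:3.6'}--\eqref{eq:3.7'} by Lemma~\ref{le:APsi=d}, the map $\hat\Phi$ satisfies \eqref{orbeqc}.

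Finally, for the regularity claim I would invoke, as in the proof of Proposition~\ref{th:poly}, that $\Psi_k=\hat p_k/\hat\delta$ with $\hat\delta$ a nonzero maximal minor of $\hat A$; by \cite[Proposition~3.11]{jmz2018} the matrix $\hat A$ has full rank at generic, i.e.\ ample, covectors, so $\hat\delta$ is nonvanishing on a neighbourhood of any such covector, and there each $\hat\Phi_k$ is smooth. The one step demanding genuine care is the bookkeeping of the previous paragraph: I must check that the weighted truncations in \eqref{eq:3.6'}--\eqref{eq:3.7'} are exactly the full nilpotent forms of \eqref{eq:3.6}--\eqref{eq:3.7} — that no structure functions of the wrong weight contribute and that the $\hat X_k(\hat\alpha)$-term with $k>m$ genuinely drops out — so that \eqref{eq:3.6'}--\eqref{eq:3.7'} are not merely necessary but also sufficient for the orbital equation to hold. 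This converse reading of the fundamental system is the crux of the argument.
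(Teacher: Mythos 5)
Your construction is exactly the paper's: the proof there defines $\hat\Phi$ by the very same formulas ($\hat{\Phi}_k = \hat{\alpha}u_k$ for $k\le m$, $\hat{\Phi}_k = \Psi_k + \hat{\alpha}u_k$ for $k>m$) and then concludes in one line by citing \cite[Proposition 3.13]{jmz2018}, which is precisely the sufficiency statement (``solution of the fundamental system $\Rightarrow$ orbital diffeomorphism near ample covectors'') that you verify by hand through the weighted reduction of \eqref{eq:3.6}--\eqref{eq:3.7} to \eqref{eq:3.6'}--\eqref{eq:3.7'} and the nonvanishing of the minor $\hat\delta$ at ample covectors. So your proposal is correct and follows essentially the same route, merely inlining the cited proposition; the only small attribution slip is that \eqref{eq:3.6'}--\eqref{eq:3.7'} are consequences recorded after Lemma~\ref{le:APsi=d}, not part of its statement.
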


\begin{proof}
Note that $\hat{d}$ is the vector $d$ constructed by using $\{\hat{X}_1, \dots, \hat{X}_n\}$ as a frame and $\hat{\alpha}$ as conformal coefficient in \eqref{eq:3.11}. Let $\Psi$ be the solution of $\hat{A} \Psi =\hat{d}$ and set
\begin{equation}
\left\{
\begin{aligned}
		& \hat{\Phi}_k = \hat{\alpha}u_k, \qquad k= 1, \dots, m, \\
    & \hat{\Phi}_k = \Psi_k + \hat{\alpha}u_k,  \qquad k= m+1, \dots, n.
\end{aligned}
\right.
\end{equation}
Define $\hat{\Phi}: T^* \hat{M} \to T^* \hat{M}$ as the fiber-preserving map such that $u \circ \hat{\Phi} = (\hat{\Phi}_1, \dots, \hat{\Phi}_n)$.
It results from \cite[Proposition 3.13]{jmz2018} that $\hat{\Phi}$ sends the extremal flows of $\hat{g}$ to the one of $\hat{\alpha}^2\hat{g}$ near any ample covector.
\end{proof}


\section{The most general sub-Riemannian Weyl type theorem}
\label{mainthmsec}

In this section we formulate the most general technical version of the sub-Riemannian Weyl theorem that we were able to obtain. The versions  of the sub-Riemannian Weyl theorem  (Theorems \ref{Weylcor2} and \ref{Weylcor3}) formulated in the Introduction will follow from its proof.

We start with the following.
\begin{Def}
	\label{polydef}
	Given an open subset $U$ of $\C M$ the function $\delta$ on the complexified cotangent bundle $T^*U$ is called a \emph{polynomial with respect to the fibers over $U$}  if, in the canonical coordinates induced by some local coordinates in $U$, $f$ is represented as a polynomial with respect to the fibers with coefficients being holomorphic function of the base $U$. Further, given a point $q_0\in M$ a \emph{germ over  $q_0$ of polynomials with respect to the fibers} of cotangent bundle is an equivalence class of such polynomials so that two polynomials are equivalent if they coincide over a neighborhood of a point $q_0$.
\end{Def}

\begin{Theorem}
	\label{mainthm}	
	Assume that  $(M,D,g)$ is a real analytic sub-Riemannian manifold such that there is no non-constant polynomial $\delta$ with respect to the fibers of $T^*U$ over some open set $U$ of\, $\mathbb CM$, such that  an open subset  of the zero-level set of $\delta$ is a manifold foliated by complex normal extremals each of which projects to non-strictly normal geodesics.
   Then the sub-Riemannian metric $g$ is Weyl projectively rigid. 
\end{Theorem}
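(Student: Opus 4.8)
The plan is to argue by contradiction, reducing Weyl projective rigidity to the single assertion that the orbital diffeomorphism coming from a conformal–projective equivalence is polynomial on the fibers, which is exactly the hypothesis needed to apply Proposition~\ref{th:poly}. So suppose $\alpha^2 g$ is a metric that is simultaneously conformal and projectively equivalent to $g$, with $\alpha$ a never-vanishing real analytic function; I must show that $\alpha$ is constant. By the cited Proposition~3.4 of \cite{jmz2018} there is, near generic covectors, a local orbital diffeomorphism $\Phi$ between the extremal flows of $g$ and $\alpha^2 g$, satisfying the intertwining relation \eqref{orbeqc}; by Proposition~\ref{th:poly} it then suffices to prove that $\Phi$ is polynomial with respect to the fibers.

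From the fundamental algebraic system \eqref{eq:3.8} together with the injectivity of $A$ at a generic $u$ (\cite[Proposition~3.11]{jmz2018}), Cramer's rule applied to a nonzero maximal minor $\delta$ of $A$ yields $\Phi_k-\alpha u_k=p_k/\delta$ for $k=m+1,\dots,n$, with $p_k$ and $\delta$ polynomial on the fibers, while $\Phi_k=\alpha u_k$ for $k\le m$ by \eqref{firstm}. Thus the fiber-components of $\Phi$ are rational. Since $g$ is real analytic I complexify the whole picture, extending $\Phi$ to a fiberwise-rational map on $T^*\C M$ over an open set $U\subset\C M$, and after cancelling common factors I take a reduced common denominator $\delta$, a polynomial with respect to the fibers. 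Assume, for contradiction, that $\Phi$ is \emph{not} polynomial; then $\delta$ is non-constant, and its zero set $Z=\{\delta=0\}$ is precisely the locus where some component of $\Phi$ has a genuine pole, i.e.\ where $\Phi$ cannot be extended smoothly.

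Next I would show that $Z$ is invariant under the complex normal extremal flow of $g$, so that its regular part (where $d\delta\neq 0$) is a complex hypersurface foliated by complex normal extremals. Substituting $\Phi_k-\alpha u_k=p_k/\delta$ into \eqref{eq:3.7} and comparing the orders of the poles on both sides forces $\vec{h}_1(\delta)$ to be divisible by $\delta$; equivalently, the ideal generated by $\delta$ is preserved by $\vec{h}_1$, so $Z$ is invariant under the flow $e^{t\vec{h}_1}$ of $\vec{h}_1^{\,\C}$. Since an integral curve of $\vec{h}_1^{\,\C}$ is by definition a complex normal extremal, the regular part of $Z$ is a manifold foliated by complex normal extremals of $g$.

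The heart of the matter, and the step I expect to be the main obstacle, is to show that each foliating extremal projects to a \emph{non-strictly} normal geodesic. Here $\delta$ is a maximal minor of $A$, and its vanishing along an entire normal extremal records the degeneration of the injectivity of the fundamental system, i.e.\ the non-ampleness of the Jacobi curve of that extremal. I would argue that such a degeneration forces the extremal to admit a nontrivial abnormal lift: in the language of Remark~\ref{abnrem1}, the ambiguity of the normal lift produced where $\Phi$ blows up corresponds exactly to a geodesic of positive corank, that is, one that is simultaneously abnormal. Granting this identification, the regular part of $Z$ is an open subset of the zero-level set of the non-constant polynomial $\delta$ which is a manifold foliated by complex normal extremals each projecting to a non-strictly normal geodesic, precisely the configuration excluded by the hypothesis of the theorem. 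This contradiction shows that $\delta$ must be constant, hence $\Phi$ is polynomial on the fibers, and Proposition~\ref{th:poly} then gives that $\alpha$ is constant. Therefore $g$ is Weyl projectively rigid.
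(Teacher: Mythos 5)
Your proposal follows the same global architecture as the paper's proof: argue by contradiction, use Proposition~\ref{th:poly} to reduce Weyl rigidity to fiberwise polynomiality of $\Phi$, extract rationality $\Psi_i=p_i/\delta$ from the fundamental algebraic system \eqref{eq:3.8}, show the zero locus of the denominator is invariant under $\vec h_1$ via a divisibility argument in \eqref{eq:3.7}, and then contradict the hypothesis of the theorem. However, the step you explicitly defer (``Granting this identification'') --- that each extremal foliating the zero locus projects to a \emph{non-strictly} normal geodesic --- is exactly where the substance of the theorem lies, and your sketch of it would not go through as stated. First, the vanishing of \emph{one} maximal minor along an extremal records no degeneracy at all: the system retains full rank wherever \emph{some} maximal minor is nonzero. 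The paper's route is: work in the factorial ring $\mathcal{O}_n[z_1,\dots,z_n]$, pick an \emph{irreducible} factor $\delta_1$ of the reduced denominator $\widetilde\delta$, and observe that since the lowest-terms expression of $\Psi_j$ is unique and independent of which nonzero maximal minor of the truncation $A_k$ (with $k\geq n-m$) one starts from, $\delta_1$ must divide \emph{all} maximal minors of $A_k$. Only then does \cite[Lemma 3.12]{jmz2018} give $\dim J^{(k+1)}_{\lambda(t)}<2n$ along an extremal $\lambda(t)$ in $\{\delta_1=0\}$; a local-constancy-of-dimension argument upgrades this to $\dim J^{(s)}_{\lambda(t)}<2n$ for all $s$ at a dense set of times; and finally \cite[Prop.\ 3.12]{ref2}, which is where the real-analyticity hypothesis is used, converts this non-ampleness into the existence of an abnormal lift. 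None of these ingredients appears in your proposal, and Remark~\ref{abnrem1} alone (the difference of two normal lifts is abnormal) does not manufacture a second normal lift out of the blow-up of $\Phi$.

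Two further technical points in your reduction are not correct as written, though both are repaired by the same device of passing to an irreducible factor. (a) Cancelling the greatest common factor of the \emph{collection} $\{\widetilde\delta,\widetilde p_{m+1},\dots,\widetilde p_n\}$ does not make each individual fraction $\widetilde p_i/\widetilde\delta$ reduced, so ``comparing orders of poles'' does not directly yield that $\vec h_1(\widetilde\delta)$ is divisible by $\widetilde\delta$; the paper only obtains that $\vec h(\widetilde\delta)\,\widetilde p_i$ is divisible by $\widetilde\delta$, writes $\widetilde\delta=\delta_1^s p$ with $\delta_1$ irreducible, chooses $j$ with $\delta_1\nmid\widetilde p_j$, and concludes $\delta_1\mid\vec h(\delta_1)$ by a factor-by-factor computation. (b) Your ``regular part of $Z$ where $d\delta\neq 0$'' can be \emph{empty}: if $\widetilde\delta=\delta_1^s$ with $s\geq 2$, then $d\widetilde\delta$ vanishes identically on $Z$. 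The paper instead works with $\delta_1$ itself and proves $d\delta_1\not\equiv 0$ on $\{\delta_1=0\}$ via the Hilbert Nullstellensatz applied on generic fibers (using that irreducibility over $\mathcal{O}_n$ forces irreducibility of the restriction to a generic fiber), and moreover requires the \emph{fiber} gradient $\left(\tfrac{\partial\delta_1}{\partial u_1},\dots,\tfrac{\partial\delta_1}{\partial u_n}\right)$ to be nonzero on the submanifold $\mathcal{S}_2$, a condition stronger than $d\delta_1\neq 0$ that is needed in the subsequent sections. In summary, your plan is the paper's plan, but the central identification of the foliating extremals as non-strictly normal is an acknowledged gap, not a proof.
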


	
	
	\begin{proof}
	Consider
	the map $\Psi=\widetilde{\Phi} - \alpha \widetilde{u}$, which is a  solution of \eqref{eq:3.8}. Then by Proposition~\ref{th:poly}, to get the conclusion of our theorem we only need to show that $\Psi$ is polynomial with respect to the fibers near a point $q_0$.
	
	Given a positive integer $k$ denote by
	$A_k$ the truncation up to the $k$th layer of the fundamental matrix
	$A$ from \eqref{eq:3.8}.
	By \cite[Proposition 3.11]{jmz2018} we can choose $k\geq n-m$ large enough so that at least one minor of size $(n-m)\times (n-m)$ in  $A_k$ is  not identically zero.
	
	From now on we work on the complexified manifold $\C M$. 
	The corresponding complexified cotangent bundle $T^*\C M$  can be identified  locally with $\C^n \times \C^n$ (where $q_0$ is identified with $0$). Let $\mathcal{O}_n$ be the set of germs of holomorphic functions on $\C M \simeq \C^n$ at $0$. Under the above identification,
	a germ of polynomials with respect to the fibers of   $T^*\C M$ in the sense of Definition \ref{polydef} can be seen as polynomials on  (the second copy of)  $\C^n$ with coefficients in $\mathcal{O}_n$. Since $\mathcal{O}_n$ is a factorial ring (or unique factorization domain), the set $\mathcal{O}_n[z_1,\dots,z_n]$ of these polynomials form a factorial ring as well (see for instance \cite{Jong2000}), which mean that every element can be written as a product of irreducible elements, uniquely up to order and units.
	
	It results from \eqref{eq:3.8}
	that for any nonzero minor $\delta$ of $A_k$ we have
	\begin{equation}
	\label{Phi'}
	\Psi_i=\cfrac{p_i}{\delta},\quad m+1\leq i\leq n,
	\end{equation}
	where $\delta$ and $p_i$ are polynomial in $\mathcal{O}_n[z_1,\dots,z_n]$. Canceling the greatest common factor of the collection of polynomials $\{\delta,p_{m+1},\ldots, p_n\}$, we get a collection of polynomials  $\{\widetilde\delta,\widetilde p_{m+1},\ldots, \widetilde p_n\}$ with the greatest common factor equal to constant and such that
	\begin{equation}
	\label{Phi1'}
	\Psi_i=\cfrac{\widetilde p_i}{\widetilde\delta}.
	\end{equation}
	Besides, substituting $\eqref{Phi1'}$ into \eqref{eq:3.7} we get
	\begin{equation}
	\label{divis}
	\vec{h}(\widetilde\delta) \widetilde p_i \text { is divisible by } \widetilde \delta.
	\end{equation}
	
	Let us show that under the assumption of Theorem~\ref{mainthm} $\widetilde \delta$ is constant. Assuming the converse, there is an irreducible polynomial $\delta_1$ in $\mathcal{O}_n[z_1,\dots,z_n]$ such that $\widetilde\delta =  \delta_1^s p$, where $s$ is a positive integer and $p$ is a polynomial such that $p$ and $\delta_1$ are coprime. By constructions, there exists $j\in \{m+1,\ldots n\}$
	such that $\widetilde p_j$ is not divisible by $\delta_1$, otherwise $\delta_1$ is a nonconstant common factor of the collection $\{\widetilde\delta,\widetilde p_{m+1},\ldots, \widetilde p_n\}$.
	
	Consider this particular $j$. Although  the polynomials $\widetilde p_j$ and $\widetilde\delta$ are not coprime in general, if we further reduce the expression \eqref{Phi1'} for $\Psi_j$ to the lowest terms (i.e.\ such that the numerator and denominator will be coprime), then the denominator will be divisible by $\delta_1$. Note that in  \eqref{Phi'} we can use any nonzero maximal minor $\delta$ of $A_k$ and  the expression for $\Psi_j$ in the lowest terms is unique and does not depend on the initial choice of the nonzero maximal minor $\delta$. Hence, $\delta_1$ is a common divisor of all maximal minors of $A_k$.
	
	From \eqref{divis}, there holds
	$$
	\vec{h}(\widetilde\delta) \widetilde p_j= \Bigl(s p \delta_1^{s-1} \vec{h}(\delta_1) + \delta_1^s \vec{h}(p) \Bigr) \widetilde p_j \text { is divisible by } \widetilde\delta=  \delta_1^s p,
	$$
	which implies that $\Bigl(s p  \vec{h}(\delta_1) + \delta_1 \vec{h}(p)\Bigr) \widetilde p_j$ is divisible by $\delta_1 p$, and so that
	$$
	p \widetilde p_j \vec{h}(\delta_1)  \text { is divisible by } \delta_1 .
	$$
	Since $\widetilde p_j$ and $p$ are not divisible by the irreducible polynomial $\delta_1$, we conclude that $\vec{h}(\delta_1)$ is divisible by $\delta_1$.
	
	Denote by $\mathcal S_1$ the zero-level set of $\delta_1$ (it is an analytic subset of $T^*\C M \simeq \C^n \times \C^n$ near the fiber $\{0\} \times \C^n$). We have shown that $\vec{h}(\delta_1)$ is zero on ${\mathcal S}_1$. Note that, although $\delta_1$ is irreducible over $\mathcal O_n$, its restriction to some fiber might be reducible over $\mathbb C$. However, the restriction on the generic fiber  (in the domain of definition of $\delta_1$) is irreducible.  This implies that $d\delta_1$ is not identically zero on $\mathcal{S}_1$.
	Indeed, assume the converse.  Since $\delta_1$ is not constant, there exists $k$ such that ${\partial\over\partial u_k}\delta_1$ is not zero but  ${\partial\over\partial u_k}\delta_1=0$ on $\mathcal{S}_1$. By applying the Hilbert Nullstellensatz to each generic fiber ${\partial\over\partial u_k}\delta_1$  must belong to the radical of the ideal generated by the restriction of $\delta_1$ to the same fiber, but this is impossible as degree of the polynomial ${\partial\over\partial u_j}\delta_1$ is smaller than degree  of  $\delta_1$.
	
	Denote by $\mathcal{S}_2$  the subset of $\mathcal{S}_1$, where the vector $\left({\partial \delta_1\over\partial u_1}, \ldots, {\partial \delta_1\over\partial u_n}\right)$\footnote{Here we could impose a weaker condition $d\delta_1\neq 0$ but we will need the given stronger condition  in the next section} is not equal to zero. By constructions this is an  open (and dense) subset of $\mathcal{S}_1$. Then, by constructions,  $\mathcal{S}_2$ is a submanifold of $\mathcal{S}_1$ and  $\vec{h}$ is tangent to ${\mathcal S}_2$. Therefore, any complexified normal extremal of $(M, D,g)$ starting at a point of ${\mathcal S}_2$ will stay in  ${\mathcal S}_2$ for sufficiently small time and $\mathcal{S}_2$ is foliated by normal extremals.

Consider such a normal extremal $\lambda(t)$, $t \in [0,T]$, in ${\mathcal S}_2$, so that $\delta_1(\lambda(t))\equiv 0$.  Given any $\lambda\in T^*\mathbb CM$  define the filtration $\{ J_{\lambda}^{(k)}\}_{k\in \mathbb N}$ as follows. First, set  $J_{\lambda}^{(0)}$ to be the tangent space  to  the fiber of $T^*\mathbb CM$ at $\lambda$, i.e.
$$J_{\lambda}^{(0)} =  \Big\{ v \in T_{\lambda}(T^* M) \ : \ \pi_* v = 0 \Big\},
$$
where, as before, $\pi: T^*\mathbb CM\mapsto \mathbb M$ denotes the canonical projection. Note that $J^{(0)}$ defines a distribution on $T^*\mathbb CM$ called the \emph{vertical distribution}.  Finally, define recursively
\begin{equation*}
J_{\lambda}^{(k)} =  J^{(k-1)}_{\lambda} + \mathrm{span}\Bigl\{(\mathrm{ad}\,\vec{h})^{k} V(\lambda): V \mathrm{\,\, is \,\,a \,\,local \,\,section \,\,of\,\,} J^{(0)}\Bigr\}.
\end{equation*}
Since $\delta_1$ is a common divisor of all maximal minors of $A_k$, due to \cite[Lemma 3.12]{jmz2018} there holds $\dim J_{\lambda(t)}^{(k+1)} < 2n$ for all $t \in [0,T]$, and so
\begin{equation}
\label{Jn-m}
\dim J_{\lambda(t)}^{(n-m)} < 2n \quad \forall t \in [0,T]
\end{equation}
(recall that from the beginning we have taken $k\geq n-m$). Further, for $t$ in an open and dense subset of $[0,T]$  the dimensions of the spaces $J_{\lambda(\tau)}^{(n-m)}$ are constant for every $\tau$ sufficiently close  to $t$. This and \eqref{Jn-m} imply that
for these times $t$ we have, for any $s \in \N$, $\dim J_{\lambda(t)}^{(s)}\leq \dim J_{\lambda(t)}^{(n-m)} < 2n$.
As a consequence, $\gamma(\cdot))=\pi (\lambda(\cdot))$ is a non-strictly normal geodesic of $(M, D,g)$ (it results from \cite[Prop.\ 3.12]{ref2} since our sub-Riemannian manifold is analytic). This completes the proof of our theorem as we can take $\mathcal{S}_2$ as the required open set in the zero-level set of $\delta$ and $U=\pi({\mathcal S}_2)$.
\end{proof}

\section{ Proof of Theorems \ref{Weylcor2} and \ref{Weylcor3}}
\label{famsec}
First we prove Theorem \ref{Weylcor3}.
	Assume by contradiction that the metric $g$ is not Weyl projectively rigid. Then by Proposition \ref{th:poly}
$\Psi$ is not a polynomial with respect to the fibers and we can repeat the arguments of proof of Theorem \ref{mainthm} in the previous section to find a polynomial $\delta_1$ such that the open  set $\mathcal{S}_2$ of the zero-level set of $\delta_1$ is foliated by non-strictly normal complex geodesics. We consider two cases separately.
	
	{\bf Case 1} \emph{$\mathcal{S}_2$ does not belong to any  level set of $h$}. Then there exists a nonzero $c\in\mathbb C$ such that  $\mathcal{S}_2$ is transversal to the $c$-level set of $h$ at some point $\lambda_0$. By construction  of $\mathcal{S}_2$ the vector $\left({\partial \delta_1\over\partial u_1}, \ldots, {\partial \delta_1\over\partial u_n}\right)$ is not zero.  Then
		for every point $q\in M$ sufficiently close to $\pi (\lambda_0)$, the intersection of an open subset of ${\mathcal S}_2$ (that for simplicity of notation will be called  $\mathcal{S}_2$ in the sequel) with the  fiber $T_q^*\mathbb C M$ and the $c$-levels set of $h$  is of dimension $n-2$ .
	
	It gives rise to a $(n-2)$-parameters family of complexified normal extremals which project to non-strictly normal geodesics passing through $q$. The corank of these geodesics may vary, but there is an integer $\kappa$,  an $\varepsilon>0$,  and  an open subset $V$ of $\mathcal S_2\cap T_q\mathbb C M\cap h^{-1}(c)$ such that for every $\lambda\in V$ the extremal $\{e^{t\vec h}\lambda, |t|<\varepsilon\}$ projects to a geodesic of corank $\kappa$. 	
		Thus we proved that there exists, for at least one positive integer $\kappa$, an $(n-2-\kappa)$-parametric family of corank $\kappa$ non-strictly normal complexified geodesics through a point, which contradicts our assumption. This concludes the proof of Theorem \ref{Weylcor3} in the considered case.
		
{\bf Case 2} \emph{$\mathcal{S}_2$ belongs to a $c$-level set of $h$ for some $c\in \mathbb C$}. Note that  $\mathcal S_2$  and $h^{-1}(c)$ have the same dimension, so $\mathcal S_2$ is an open subset of $h^{-1}(c)$.
	Then
	for every point $q\in \pi (\mathcal{S}_2)$  the intersection of an open subset of ${\mathcal S}_2$ (that for simplicity of notation will be called  $S_2$ in the sequel) with the  fiber $T_q^*\mathbb C M$   is of dimension $n-1$. Then we can arrive to a contradiction in the same way as in the last paragraph of the previous case.

%
%
		
	Now let us prove Theorem \ref{Weylcor2}.
	 Let  $(\hat{M}, \hat{D}, \hat{g})$ be the nilpotent approximation of $(M, D,g)$ at $q_0$, and $\Psi$ be the map  given by Lemma~\ref{le:APsi=d}. If we will show that $\Psi$ is polynomial, then the conclusion of the theorem will then follow from Lemma~\ref{prop:poly} and the same argument as in the end of the proof of Proposition~\ref{th:poly}.
	 To prove that $\Psi$ is polynomial we can literally repeat the arguments of the proof of Theorem \ref{mainthm} and of the previous two paragraphs for the nilpotent approximation  $(\hat{M}, \hat{D}, \hat{g})$.

\section{ Proof of Weyl's type theorems in terms of abnormal extremals}
\label {gensec}

 \subsection{Abnormal extremals and their properties} First,  describe  the complex  abnormal extremals of a distribution $D$ of rank $m$. The construction is the complexification of the standard geometric construction of usual (i.e. real) abnormal extremals from Optimal Control Theory (for details, see for example \cite{chitour} and \cite{jacsymb}).

Let  $\sigma$ be the $2$-form on $T^*\mathbb C M$ which is the complexification of the standard symplectic structure on $T^*M$.
Also denote by $(T\mathbb C M)^\perp$ the zero section of $T^*\mathbb C M$.

\begin{Def}
	\label{abndef}
	An unparametrized Lipschitzian complex curve in $\mathbb  C D^\perp\backslash (T \mathbb C M)^\perp$ is called
	\emph{an abnormal extremal} of a distribution $\mathbb C D$ if the (complex) tangent line to it at
	almost every point belongs to the kernel of the restriction
	$\sigma|_{\mathbb C D^\perp}$ of $\sigma$ to $\mathbb C D^\perp$ at this point.
\end{Def}

Directly from this definition it follows that any abnormal extremal belongs to the set
\begin{equation}
\label{tildeWD}
\widetilde W_D:=\{\lambda\in \mathbb C D^\perp:{\rm ker}\bigl(\sigma|_{\mathbb C D^\perp}(\lambda)\bigr)\neq 0\}.
\end{equation}

Since ${\rm codim} \, D^\perp =\mathrm{rank} \,D=m$, from the elementary properties of skew-symmetric matrices and forms it follows that
\begin{equation}
\label{tildeWDdescription}
\widetilde W_D=\begin{cases}\C D^\perp& \text{if } m\text { is odd,}\\ \{\lambda\in \C D^\perp:\Bigl (\bigwedge ^{n-m/2}(\sigma|_{\C D^\perp})\Bigr)(\lambda)=0\} &  \text{if } m\text { is even,}
\end{cases}
\end{equation}
where $\bigwedge ^{s}(\sigma|_{\C D^\perp})$ denotes the $s$th wedge-power of the form $\sigma|_{\C D^\perp}$. Note that for every odd rank distributions and for generic even rank distributions
\begin{equation}
\label{tildeWDdim}
\dim \widetilde W_D=\begin{cases}2n-m& \text{if } m\text { is odd,}\\ 2n-m-1 &  \text{if } m\text { is even.}
\end{cases}
\end{equation}
Note that in general for even rank distribution $D$ the set $\widetilde W_D$ is not smooth at every point but it is smooth for generic distributions of this class.

Now define  the following subset $W_D$ of $\widetilde W_D$ as follows:
\begin{equation}
\label{WD}
W_D:=\{\lambda\in \widetilde W_D:{\rm ker}\bigl(\sigma|_{\widetilde W_D}(\lambda)\bigr) \text{ is one-dimensional}\}.
\end{equation}
(for the detailed description of the set $W_D$ in terms of a local basis of distribution see \cite{jacsymb}).

By constructions, the kernels of $\sigma|_{W_D}$ form the \emph{characteristic complex rank $1$ distribution} ${\mathcal C}$ on $W_D$ and
the complex integral curves  of this distribution are
complex abnormal extremals of the distribution $D$. Moreover, these are all complex abnormal extremals which lie entirely in $W_D$. Following \cite{chitour},
we give the following definition.
\begin{Def}
	\label{minorddef}
We say that a complex abnormal extremal $\Gamma$ and the corresponding abnormal geodesics are  of \emph{minimal order} if the set $\Gamma\cap W_D$ has a full measure in $\Gamma$.
\end{Def}

\begin{rk}
	\label{open}
Since $W_D$ is open, the set $\Gamma\cap W_D$ is open in $\Gamma$.
\end{rk}

\subsection{The key lemma making the passage from normal to abnormal extremals}
\label{abnrem2sec} First we prove the following.

\begin{Lemma}
\label{abnrem3}
Let $D$ be a real analytic distribution such that every complex abnormal extremal of $D$  is of minimal order.
Assume that  for a sub-Riemannian metric $g$ on $D$ there exists
a nonzero polynomial $\delta$ with respect to the fibers of $T^*U$ over some open set $U$ of $\mathbb CM$, such that
 an open subset  of the zero-level set of $\delta$ is
foliated by complex normal extremals of $g$  which are lifts of non-strictly normal geodesics. Then there exists a submanifold $\mathcal S_0$  of codimension
one in the projectivization $\mathbb PT^*\mathbb C M$ of $T^*\mathbb C M$ foliated by complex abnormal extremals of $D$. Moreover, we can choose $\mathcal S_0$ such that it belongs to the projectivization $\mathbb P W_D$ of $W_D$.
\end{Lemma}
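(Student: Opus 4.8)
The plan is to read off, from the foliation of $\mathcal S_2$ by normal extremals, an analytic family of abnormal extremals and to show that their projectivizations sweep out the desired submanifold $\mathcal S_0$. By hypothesis there is a nonzero fibre‑polynomial $\delta$ whose zero set contains an open subset foliated by normal extremals $\lambda(\cdot)$ of $g$, each projecting to a \emph{non-strictly} normal geodesic $\gamma=\pi\circ\lambda$. Replacing $\delta$ by one of its irreducible factors $\delta_1$ and passing to the smooth locus where the fibre‑gradient $(\partial_{u_1}\delta_1,\dots,\partial_{u_n}\delta_1)$ is nonzero, exactly as in the proof of Theorem~\ref{mainthm}, I obtain a submanifold $\mathcal S_2$ to which $\vec h$ is tangent and which is foliated by such normal extremals. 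The key point is that each $\gamma$, being simultaneously abnormal, carries a nonzero abnormal lift — this is precisely Remark~\ref{abnrem1}, which says the corank of $\gamma$ is positive — and I want to promote this pointwise fact into an analytic family.

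First I would use the minimal-order hypothesis to make the abnormal lift canonical. By Definition~\ref{minorddef} every abnormal extremal $\Gamma$ of $D$ meets $W_D$ in a full-measure, hence (by Remark~\ref{open}) open, subset; on $W_D$ the characteristic distribution $\mathcal C$ is a genuine line field, so through each point of $W_D$ passes a \emph{unique} abnormal extremal. Consequently, for $\lambda\in\mathcal S_2$ lying over the part of $\gamma$ whose abnormal lift meets $W_D$, the projectivized abnormal lift $t\mapsto[\mu(t)]\in\mathbb P W_D$ is well defined and depends analytically on $\lambda$, everything being real analytic after complexification. I would then define $\mathcal S_0$ to be an open piece of the union of these projectivized abnormal extremals, i.e.\ the image of the analytic assignment $\lambda\mapsto[\mu]$ restricted to $\mathbb P W_D$; this immediately places $\mathcal S_0$ inside $\mathbb P W_D$, as the second assertion of the lemma requires.

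That $\mathcal S_0$ is foliated by complex abnormal extremals is then automatic: its leaves are the curves $t\mapsto[\mu(t)]$, which are precisely the integral curves of the rank-one characteristic field $\mathcal C$ on $W_D$, hence complex abnormal extremals of $D$. It remains to verify that $\mathcal S_0$ is a submanifold of codimension one. Here I would run the dimension bookkeeping starting from $\operatorname{codim}\mathcal S_2=1$ in $T^*\mathbb C M$: the normal lifts of a fixed non-strictly normal geodesic form an affine space directed by its abnormal lifts, so intersecting $\mathcal S_2$ with these affine spaces, quotienting by the $\vec h$-flow, and passing to the projectivized abnormal lifts trades the suppressed normal-lift directions for the abnormal ones, leaving a codimension-one image after projectivization.

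The hard part will be the last two points: ensuring the abnormal-lift assignment is single-valued and analytic, and that its image is an honest submanifold of codimension one. Both difficulties stem from the possibility that $\gamma$ has corank greater than one, in which case its abnormal lift is a priori an entire linear family and need not vary nicely with $\gamma$. The minimal-order hypothesis is exactly the tool that resolves this: it forces $\mathcal C$ to be one-dimensional on the full-measure set $W_D$, so that the abnormal extremal through each point is unique, the assignment $\lambda\mapsto[\mu]$ becomes a genuine analytic map, and — together with the uniqueness of the reduced fractions $\Psi_i=\widetilde p_i/\widetilde\delta$ established in the proof of Theorem~\ref{mainthm} — the dimension count closes up to give $\operatorname{codim}\mathcal S_0=1$.
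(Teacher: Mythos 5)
Your overall architecture matches the paper's: build abnormal lifts out of the non-strictly normal geodesics foliating the zero set of $\delta$ via Remark~\ref{abnrem1}, sweep out $\mathcal S_0$ inside $\mathbb P W_D$, and invoke minimal order plus the characteristic line field $\mathcal C$ to get the foliation. But there is a genuine gap at exactly the point you flag as ``the hard part'': your claim that the minimal-order hypothesis makes the assignment $\lambda \mapsto [\mu]$ single-valued and analytic is false when the corank $\kappa$ is greater than one. Minimal order (Definition~\ref{minorddef}) says that each abnormal extremal spends full measure time in $W_D$; the fact that $\mathcal C$ is a line field on $W_D$ holds by the very construction of $W_D$ (one-dimensional kernels) and gives uniqueness of the abnormal extremal \emph{through a given point of $W_D$} --- it does not select, for a corank-$\kappa$ geodesic $\gamma$, one abnormal lift out of its $(\kappa-1)$-dimensional projective family, nor does it distinguish among the many normal extremals in $\mathcal S_2$ lying over the same $\gamma$. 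So your ``canonical'' map is undefined precisely in the case you need to worry about, and your codimension-one count (``trades the suppressed normal-lift directions for the abnormal ones'') is asserted, not proved; the appeal to uniqueness of the reduced fractions $\Psi_i=\widetilde p_i/\widetilde \delta$ is a non sequitur here, since that fact was used in Theorem~\ref{mainthm} only to show $\delta_1$ divides all maximal minors.

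The paper resolves the multi-valuedness by an explicit selection, not by minimal order: it sets $\mathcal A_\lambda$ (affine space of normal lifts of $\gamma_\lambda$), $\mathcal B_\lambda=\mathcal S\cap\mathcal A_\lambda$, notes $[\lambda]\notin\mathcal B_\lambda$, shrinks to an open set where $\dim\mathcal A_\lambda=\kappa$ and $\dim \mathcal B_\lambda\in\{\kappa-1,\kappa\}$, takes a codimension-one slice $W$ transversal to the $\mathcal B_\lambda$ when $\kappa>1$, and chooses a smooth section $f(\lambda)\in\mathcal A_\lambda$ so that $\lambda\mapsto[f(\lambda)-\lambda]$ is injective on each $\mathcal B_\lambda\cap W$ --- possible precisely because $\dim(\mathcal B_\lambda\cap W)\leq \dim \mathbb P\mathcal E_\lambda$. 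Even after this, the paper concedes that the curves $\Gamma_\lambda$, $\lambda\in W$, may intersect globally, so $\bigcup_{\lambda\in W}\Gamma_\lambda$ need not be foliated; this is where, and only where, minimal order enters: some point of $\Gamma_{\lambda_0}$ lies in $\mathbb P W_D$, and near such a point (Remark~\ref{open}) the $\Gamma_\lambda$ are integral curves of the line field $\mathcal C$, hence locally non-intersecting and foliating a neighborhood $V_0$ in $\mathcal S_0\cap \mathbb P W_D$. Your proposal inverts this logic --- using minimal order up front for a canonicity it cannot deliver, and omitting the slice $W$, the section $f$, and the injectivity estimate that actually make the dimension count close.
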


\begin{proof}
Let $\mathcal{S}$ be a smooth part of the zero level set of $\delta$. Given $\lambda\in \mathcal S$ take the geodesic $\gamma_\lambda$ obtained by the projection to $M$ of the sub-Riemannian extremal passing through $\lambda$ and let $\mathcal A_\lambda$ be the affine space of all normal lifts of $\gamma_\lambda$. Note that  $\mathcal A_\lambda$ can be seen as an affine subspace of $T_{\pi(\lambda)}^*\mathbb CM$, as any lift is uniquely defined by its intersection with $T_{\pi(\lambda)}^*\mathbb C M$. Also, clearly $\lambda\in \mathcal{A}(\lambda)$, and by the non-strictly normal assumption, $\dim \,\mathcal A_\lambda>0$. This dimension may vary, but there is an integer $\kappa>0$  and an open subset $V$ of $\mathcal S$ such that  $\dim \,\mathcal A_\lambda=\kappa$ for every $\lambda\in V$. Further, let $\mathcal B_\lambda=\mathcal S\cap \mathcal A_\lambda$. Let $\mathcal E_\lambda$ be the vector space corresponding to the affine space $\mathcal A_\lambda$ and $\mathbb P \mathcal E_\lambda$ its projectivization.  In the sequel, given an element $e\neq 0$ of  a vector space,  $[e]$ will denote the line generated by $e$, i.e.\ the element of the corresponding projective space. Note that the line  $[\lambda]$ in $T_{\pi(\lambda)}^*\mathbb M$  does not
	belong to $\mathcal B_\lambda$, because two  extremals passing through two different points of the line $[\lambda]$ project to the same unparametrized curve, but the parametrizations on this curve, induced by these two projections, are different (one is a non-unit scalar multiple of the other one).

Let us distinguish the following pair of alternative properties:
\begin{enumerate}
	\item  $\mathcal B_\lambda$ is  a $(\kappa-1)$-dimensional manifold;
	\item $\mathcal B_\lambda$ is a $\kappa$-dimensional manifold;
\end{enumerate}
It is easy to see that, maybe after shrinking $V$, either property (1) or property (2)  hold simultaneously for every $\lambda\in V$.

	Take some codimension $1$ submanifold $W$ of $V$.  If $\kappa>1$ we  take $W$ being transversal to  $\mathcal{B}_\lambda $ for every $\lambda$. For  every $\lambda \in W$ choose $f(\lambda)\in \mathcal A_\lambda$ smoothly in $\lambda$ and such that the restriction of the map $\lambda\mapsto  [f(\lambda)-\lambda]$ to the set $\mathcal B_\lambda\cap W$ is injective. The latter is possible because
\begin{equation}
\label{dimcompar}
\dim \,(\mathcal B_\lambda\cap W)\leq \dim \mathbb P \mathcal E_\lambda.
\end{equation}
Indeed, the right hand-side of~\eqref{dimcompar} is equal to $\kappa-1$, whereas the left hand-side of \eqref{dimcompar} is equal to $\kappa-2$ if condition (1) holds with $\kappa>1$; or to $\kappa-1$ if either condition (2)  or condition (1) with $\kappa=1$ holds.

By Remark  \ref{abnrem1} there is a projectivized  abnormal lift $\Gamma_\lambda$ of $\gamma_{\lambda}$ passing through $[f(\lambda)-\lambda]$. Let
\begin{equation}
\mathcal S_0=\bigcup_{\lambda \in W} \Gamma_\lambda.
\end{equation}	
The imposed injectivity conditions ensures that to different normal extremals, foliating $\mathcal{S}$,
different abnormal extremals are assigned. However, in general the map $\lambda\mapsto [f(\lambda)-\lambda]$ on the whole $W$ is not injective so that the family $\{\Gamma_\lambda\}_{\lambda\in W}$ of curves in general do not foliate $\mathcal S_0$, because some of them intersect. However, now we can use the assumption that all abnormal extremals are of minimal order: fix $\lambda_0\in W$, then there exists $\lambda\in \Gamma_{\lambda_0}\cap \mathbb P W_D$, where $\mathbb P W_D$ is the projectivization of $W_D$.
By Remark \ref{open} there exists a connected neighborhood $V_0$ of $\lambda$ in $\mathcal S_0\cap \mathbb P W_D$ and this neighborhood is foliated by (connected components) of $\Gamma_\lambda \cap V_0$. This is the codimension one submanifold of  $\mathbb PT^*\mathbb C M$  desired in the lemma.

\end{proof}

%

	


\subsection{ Proof  of  Theorem \ref{Weylcor3a}}
Assume by contradiction that the metric $g$ is not Weyl projectively rigid. Then by Theorem~\ref{mainthm} there exists a polynomial $\delta$ such that the open level set $\mathcal{S}$ of its zero-level set is foliated by non-strictly normal complex geodesics. Then we can choose a codimension 1  submanifold $\mathcal S_0$ of $\mathbb P T^* \C M$ as in Lemma \ref{abnrem3}. This means that $\dim \mathcal S_0=2n-2$. On the other hand $\mathcal S_0$ is a submanifold of $\mathbb PW_D$ and by \eqref{tildeWDdim} for $m>1$ we have
$$2n-2=\dim \mathcal S_0\leq \dim \mathbb P \widetilde W_D\leq 2n-3,$$ which leads to the contradiction.

\subsection{ Proof  of  Theorem \ref{Weylcor2a}}
		
Now let us prove Theorem \ref{Weylcor2}.
Let  $(\hat{M}, \hat{D}, \hat{g})$ be the nilpotent approximation of $(M, D,g)$ at $q_0$, and $\Psi$ be the map  given by Theorem~\ref{mainthm}. If we will show that $\Psi$ is polynomial, then the conclusion of the theorem will then follow from Lemma~\ref{prop:poly} and the same argument as in the end of the proof of Proposition~\ref{th:poly}.
To prove that $\Psi$ is polynomial we can literally repeat the arguments of the proof of Theorem \ref{mainthm} and of the previous subsection for the nilpotent approximation  $(\hat{M}, \hat{D}, \hat{g})$.

\subsection {A refinement of the condition of minimal order}
\label{weaksec}
Assume that there exists a nested set collection $\{\widetilde W_D^{(i)}\}_{i=0}^N$ in $\mathbb P D^\perp$ such that the following conditions hold:
\begin{enumerate}
	\item $\widetilde W_D^{(0)}=\widetilde W_D$;
	\item $\widetilde W_D^{(i+1)}\subsetneq \widetilde W_D^{(i)}$ for every  $i\in\{0,\ldots N-1$\};
	\item The set $W_D^{(i)}:=\widetilde W_D^{(i)}\backslash \widetilde W_D^{(i+1)}$ is a manifold  and  $d_i:=\dim\, W_D^{(i)}$ is strictly decreasing with respect to $i$, $i\in\{0,\ldots N-1\}$;
	\item
	\begin{itemize}
		\item
 If $d_i$ is odd, then the kernel of restriction of the canonical symplectic form $\sigma$ to $W_D^{(i)}$ is
 one-dimensional at every point of $W_D^{(i)}$;
 \item If $d_i$ is even, then the subset $\bar W_D^{(i)}$ of $W_D^{(i)}$,  consisting of all points for which the restriction of $\sigma$ to $W_D^{(i)}$ has nonzero kernel, is  a codimension one submanifold of $W_D^{(i)}$ such the kernel of the restriction of $\sigma$ to $\bar W_D^{(i)}$ is one-dimensional at every point of $\bar W_D^{(i)}$
 \end{itemize}
	\end{enumerate}

\begin{Def}
	\label {weakmindef}
We say that a complex abnormal extremal $\Gamma$ and the corresponding abnormal geodesics have \emph{weakly minimal order} if there exist $i\in\{0,\ldots, N\}$ such that the set $\Gamma\cap W_D^{(i)}$ has a full measure in $\Gamma$.
\end{Def}

Obviously a complex abnormal extremal of minimal order has also weakly minimal order (it just corresponds to $i=0$ in Definition \ref{weakmindef})

From the proof of Lemma \ref{abnrem3} it follows immediately that \emph{we can replace the condition that all abnormal extremals of $D$ are of minimal order by the condition that all abnormal extremals of $D$ have weakly minimal order in the formulation of Lemma \ref{abnrem3} and consequently in  the formulations of Theorems \ref{Weylcor2a} and  \ref{Weylcor3a} and Corollary \ref{Cor:anyrankrigid}}.

\end{document}